\documentclass[11pt]{amsart}

\usepackage{amscd,amssymb,amsopn,amsmath,amsthm,mathrsfs,graphics,amsfonts,enumerate,verbatim,calc
}
\usepackage{bbm}
\usepackage[all,cmtip]{xy}
\usepackage[all]{xy}
\usepackage{tikz}
\usepackage{lscape}
\usetikzlibrary{matrix,arrows,decorations.pathmorphing,cd}

\usepackage{hyperref}
\hypersetup{colorlinks=true}
\usepackage{cleveref}

\usepackage{scalerel}
\usepackage{stackengine,wasysym}
\usetikzlibrary {positioning}
\usetikzlibrary{patterns}
\usetikzlibrary{calc}
\definecolor {processblue}{cmyk}{0.96,0,0,0}

\usepackage{subfigure}

\usepackage{comment} 

\usepackage{ dsfont }

\usepackage{color}


\usepackage[OT2,OT1]{fontenc}
\newcommand\cyr{%
\renewcommand\rmdefault{wncyr}%
\renewcommand\sfdefault{wncyss}%
\renewcommand\encodingdefault{OT2}%
\normalfont
\selectfont}
\DeclareTextFontCommand{\textcyr}{\cyr}

\usepackage{amssymb,amsmath}

\DeclareFontFamily{OT1}{rsfs}{}
\DeclareFontShape{OT1}{rsfs}{n}{it}{<-> rsfs10}{}
\DeclareMathAlphabet{\mathscr}{OT1}{rsfs}{n}{it}

\topmargin=0in
\oddsidemargin=0in
\evensidemargin=0in
\textwidth=6.5in
\textheight=8.5in

\numberwithin{equation}{section}
\hyphenation{semi-stable}

\newtheorem{theorem}{Theorem}[section]
\newtheorem{lemma}[theorem]{Lemma}
\newtheorem{cor}[theorem]{Corollary}

\newtheorem{prop}[theorem]{Proposition}

\theoremstyle{definition}
\newtheorem{defn}[theorem]{Definition}
\theoremstyle{remark}
\newtheorem{remark}[theorem]{Remark}





\renewcommand{\tilde}{\widetilde}

\newcommand{\lk}{\operatorname{lk}}

\newcommand{\Tor}{\operatorname{Tor}}

\newcommand{\depth}{\operatorname{depth}}









\begin{document}
\title[Minimal Cohen-Macaulay Complexes]{Minimal Cohen-Macaulay Simplicial Complexes}

\author[Dao]{Hailong Dao}
\email[Hailong Dao]{hdao@ku.edu}
\author[Doolittle]{Joseph Doolittle}
\email[Joseph Doolittle]{jdoolitt@ku.edu}
\author[Lyle]{Justin Lyle}
\email[Justin Lyle]{justin.lyle@ku.edu}
\address{Department of Mathematics\\
University of Kansas\\
Lawrence, KS 66045-7523 USA}
\date{\today}

\thanks{2010 {\em Mathematics Subject Classification\/}: 05E40, 05E45, 13C15, 13D03}

\keywords{simplicial complexes, Cohen-Macaulay, triangulations}

\begin{abstract}
We define and study the notion of a minimal Cohen-Macaulay simplicial complex. We prove that any Cohen-Macaulay complex is shelled over a minimal one in our sense, and we give sufficient conditions for a complex to be minimal Cohen-Macaulay. We show that many interesting examples of Cohen-Macaulay complexes in combinatorics are minimal, including Rudin's ball, Ziegler's ball, the dunce hat, and the non-partitionable Cohen-Macaulay complex of \cite{DG16}. We further provide various ways to construct such complexes.

\end{abstract}

\maketitle

\section{Introduction}

In this work, we introduce and study the notion of a minimal Cohen-Macaulay complex. Fix a field $k$. Let $\Delta$ be a simplicial complex. We say $\Delta$ is minimal Cohen-Macaulay (over $k$) if it is Cohen-Macaulay and removing any facet from the facet list of $\Delta$ results in a complex which is not Cohen-Macaulay. See Section \ref{sec2} for precise definitions.

For the rest of the paper we shall write CM for Cohen-Macaulay. We first observe a crucial fact. 

\begin{theorem}\label{t1}
Any CM complex is shelled over a minimal CM complex. $($Theorem \ref{thm1}$)$ 
\end{theorem}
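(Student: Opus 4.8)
The plan is to prove Theorem~\ref{t1} by a short facet-removal induction that simply reads the definition of minimality in reverse. Recall from Section~\ref{sec2} that a CM complex $\Delta$ fails to be minimal precisely when some facet can be deleted from its facet list without destroying Cohen--Macaulayness, and that $\Delta$ being \emph{shelled over} a minimal CM subcomplex $\Gamma \subseteq \Delta$ means there is an ordering $G_1,\dots,G_t$ of the facets of $\Delta$ not lying in $\Gamma$ such that $\Gamma$, $\Gamma\cup\langle G_1\rangle$, $\dots$, $\Gamma\cup\langle G_1,\dots,G_t\rangle=\Delta$ are all CM, with $\Gamma$ itself minimal CM.

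First I would set up the induction on the number $m$ of facets of $\Delta$. If $\Delta$ is already minimal CM, it is shelled over itself with $t=0$. Otherwise, by minimality there is a facet $F$ so that the subcomplex $\Delta'$ generated by the remaining facets of $\Delta$ is CM. Here I would record the routine observations that $\Delta'$ has $m-1$ facets, that $F\notin\Delta'$, and that $\Delta'$ (being CM, hence pure, and still containing a facet of $\Delta$ when $m\ge 2$) is pure of the same dimension as $\Delta$; this keeps everything inside the category in which \emph{shelled over} is defined. Applying the inductive hypothesis to $\Delta'$ gives a minimal CM complex $\Gamma$ and an ordering $G_1,\dots,G_t$ of the facets of $\Delta'$ outside $\Gamma$ witnessing $\Delta'$ shelled over $\Gamma$. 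Since $\Gamma\subseteq\Delta'\subseteq\Delta$ and $\Delta=\Delta'\cup\langle F\rangle=\Gamma\cup\langle G_1,\dots,G_t,F\rangle$ is CM by hypothesis, the ordering $G_1,\dots,G_t,F$ shows $\Delta$ is shelled over the same $\Gamma$, completing the induction. Equivalently, one may phrase this non-inductively: keep deleting a CM-preserving facet; the facet count strictly drops, so the process halts, and it can halt only at a complex from which no facet can be deleted while staying CM, that is, at a minimal CM complex, and reversing the list of deletions is the desired shelling.

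I do not expect any genuine obstacle here: the whole content of the statement is the remark that failure to be minimal is exactly the ability to peel off one more facet while preserving CM-ness, so the result is essentially a reformulation of the definition together with a termination argument. The only points needing a little care are the bookkeeping items above (purity and dimension are preserved under a CM-preserving facet deletion; being shelled over $\Gamma$ is stable under appending one facet when the larger complex is CM) and checking that the degenerate cases, namely a single simplex and the irrelevant complex $\{\emptyset\}$, are consistent with the conventions fixed in Section~\ref{sec2}, so that the induction actually terminates at a legitimate minimal CM complex.
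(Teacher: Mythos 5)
Your proposal rests on a misreading of the definition of ``shelled over.'' In the paper, $\Delta$ is shelled over $\Gamma$ when there is a sequence of \emph{shelling moves} taking $\Gamma$ to $\Delta$, where adding a facet $F$ is a shelling move precisely when $\langle F\rangle\cap\Delta_F$ is pure of dimension $|F|-2$; it is \emph{not} the condition, as you state, that the intermediate complexes $\Gamma\cup\langle G_1,\dots,G_i\rangle$ are all CM. (With your reading, ``shelled over $\varnothing$'' would just mean that every initial segment of facets spans a CM complex, whereas the paper's notion is designed to recover shellability.) Because of this, your argument never verifies the one thing that actually needs proof: that when you delete a CM-preserving facet $F$ from a CM complex $\Delta$, the reverse step from $\Delta_F$ to $\Delta$ really is a shelling move, i.e.\ $\langle F\rangle\cap\Delta_F$ is pure of codimension one in $F$. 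This is exactly the content of Lemma~\ref{isshelled}, which shows that for any complex satisfying Serre's condition $(S_2)$ (in particular any CM complex) and \emph{any} facet $F$, $\Delta$ is shelled over $\Delta_F$; its proof requires a genuine argument (a connectivity-of-links argument using $(S_2)$, or an algebraic $\Tor$ argument via Alexander duality). The item you dismiss as bookkeeping --- that being shelled over $\Gamma$ is stable under appending one facet when the larger complex is CM --- is, under the correct definition, precisely this nontrivial lemma, and you give no argument for it.

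The facet-peeling induction and termination argument is fine and matches the paper's proof of Theorem~\ref{thm1}, which likewise iterates: if some $\Delta_F$ is CM, pass to it and repeat, invoking Lemma~\ref{isshelled} at each step. So once that lemma is in hand your outline goes through. As written, however, the proposal only proves the easy statement that one can delete facets one at a time, staying CM, until no deletion preserves CM-ness; it does not show that these deletions reverse to shelling moves, which is the actual substance of Theorem~\ref{t1}.
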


Thus, in a strong sense, understanding CM complexes amounts to understanding the minimal ones. We support this claim by demonstrating that many interesting examples of CM complexes in combinatorics are minimal. Theorem \ref{t1} also puts shellable complexes in a broader context: they are precisely complexes shelled over the empty one. Its proof relies on a simple but somewhat surprising statement (Lemma \ref{isshelled}), which might be of independent interest.

Below is a collection of our main technical results which establish various necessary and sufficient conditions for a complex to be minimal CM.

\begin{theorem} The following statements hold.
\begin{enumerate}
\item[$(1)$] A minimal CM complex is acyclic. $($Corollary \ref{cor2}$)$
\item[$(2)$] Let $\Delta$ be CM and $i$-fold acyclic. If no facet of $\Delta$ contains more than $i-1$ boundary ridges, then $\Delta$ is minimal CM. $($Theorem \ref{thm3}$)$
\item[$(3)$] If $\Delta$ is a ball, then $\Delta$ is minimal CM if and only if it is strongly non-shellable in the sense of \cite{Zi98}. $($Proposition \ref{strongnonshell}$)$
\item[$(4)$] If $\Delta$ is minimal CM and $\Gamma$ is CM, then $\Delta \star \Gamma$ is minimal CM. $($Theorem \ref{thm4}$)$

\end{enumerate}
\end{theorem}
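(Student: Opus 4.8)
For $(1)$ I would prove the contrapositive: if $\Delta$ is CM of dimension $d$ and $\tilde H_d(\Delta;k)\neq 0$, then some facet can be dropped from the facet list while keeping CM-ness. Pick a nonzero $z\in\tilde H_d(\Delta;k)=Z_d(\Delta)$ and a facet $F$ in its support; since $\partial z=0$ and the coefficient of $F$ is nonzero, every ridge of $F$ is shared, hence every proper face of $F$ lies in another facet, so $\Delta':=\langle\text{facets}\setminus F\rangle$ is just $\Delta$ with the single face $F$ removed and is pure of dimension $d$. Checking Reisner's criterion for $\Delta'$: for $\sigma\not\subseteq F$, $\lk_{\Delta'}(\sigma)=\lk_\Delta(\sigma)$; for $\sigma\subseteq F$, $\lk_{\Delta'}(\sigma)$ is $\lk_\Delta(\sigma)$ with the single facet $F\setminus\sigma$ removed, and the long exact sequence of that pair identifies its only possibly-nonzero subtop reduced homology with the cokernel of the map $\tilde H_{\dim\lk_\Delta(\sigma)}(\lk_\Delta(\sigma))\to k$ recording the coefficient of $F\setminus\sigma$; this vanishes because the iterated link of $z$ at $\sigma$ is a top cycle of $\lk_\Delta(\sigma)$ with $F\setminus\sigma$ in its support. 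No induction is needed here. For $(4)$ I would induct on $\dim\Gamma$, the base $\Gamma=\{\emptyset\}$ being exactly the hypothesis that $\Delta$ is minimal CM. Since the Stanley--Reisner ring of a join is the tensor product over $k$ of those of the factors and a tensor product of CM $k$-algebras is CM, $\Delta\star\Gamma$ is CM. For a facet $F\cup G$ and a vertex $v\in G$, set $\Sigma=(\Delta\star\Gamma)\setminus\{F\cup G\}$; unwinding definitions gives $\lk_\Sigma(v)=(\Delta\star\lk_\Gamma(v))\setminus\{F\cup(G\setminus v)\}$, which by the inductive hypothesis applied to the lower-dimensional CM complex $\lk_\Gamma(v)$ is not CM, hence $\Sigma$ is not CM. (Degenerate cases---simplices, the void complex---are governed by the conventions of Section \ref{sec2}.)

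\textbf{Part $(3)$.} A ball is CM, so both conditions make sense, and one direction is trivial: if $B$ is minimal CM and $B\setminus F$ were a ball it would be CM, contradicting minimality, so $B$ is strongly non-shellable. For the converse I would prove that, for a ball $B$ and a facet $F$, CM-ness of $B\setminus F$ forces $B\setminus F$ to be a ball. Writing $B=(B\setminus F)\cup\bar F$ and $\Gamma:=(B\setminus F)\cap\bar F=\bar F\setminus U$, where $U$ is the up-set of free faces of $F$, Mayer--Vietoris together with acyclicity of $B$ gives $\tilde H_\ast(B\setminus F)\cong\tilde H_\ast(\Gamma)$, and since $k[B]$ and $k[\bar F]$ are CM of dimension $d+1$, the local-cohomology long exact sequence of $0\to k[B]\to k[B\setminus F]\oplus k[\bar F]\to k[\Gamma]\to 0$ forces $k[\Gamma]$ to be CM of dimension $d$. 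A purity count then forces $U$ to have a single minimal generator $W$ (the alternative $U=\{F\}$, i.e. $\Gamma\cong S^{d-1}$, being excluded by $\tilde H_{d-1}(B\setminus F)=0$), so $\Gamma\cong\partial\Delta^{|W|-1}\star\Delta^{d-|W|}$ is a $(d-1)$-ball and $B\setminus F$ is acyclic. Finally a PL-topology fact---excising an \emph{ear} $\bar F$ attached to $B$ along a facial $(d-1)$-ball of $\partial\bar F$ yields a $d$-ball---shows $B\setminus F$ is a $d$-ball. Contrapositively, strong non-shellability makes every $B\setminus F$ non-CM.

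\textbf{Part $(2)$.} Fix a facet $F$ and assume $\Delta':=\langle\text{facets}\setminus F\rangle$ is CM; I derive a contradiction. As above let $\Gamma=\Delta'\cap\bar F=\bar F\setminus U$ with $U$ the up-set of free faces of $F$, so the local-cohomology sequence of $0\to k[\Delta]\to k[\Delta']\oplus k[\bar F]\to k[\Gamma]\to 0$ yields $H^j_\m(k[\Delta'])\cong H^j_\m(k[\Gamma])$ for $j\le d-1$, whence $k[\Gamma]$ is CM of dimension $d$. If $F$ has two or more minimal free faces then $\Gamma$ is not pure, so $k[\Gamma]$ is not CM of dimension $d$---contradiction. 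If $F$ has no free proper face, then $F$ has no boundary ridge, $\Gamma\cong S^{d-1}$, and Mayer--Vietoris with the acyclicity of $\Delta$ (the case $i=1$ of $i$-fold acyclicity) gives $\tilde H_{d-1}(\Delta')\cong\tilde H_{d-1}(\Gamma)\neq 0$---contradicting CM-ness of $\Delta'$. In the remaining case $U$ is the up-set of a single face $W$; then the boundary ridges of $F$ are exactly $\{F\setminus v:v\in A\}$ for $A:=F\setminus W$, so $|A|\le i-1$ by hypothesis, and $\bigcap_{v\in A}(F\setminus v)=W$. Now $\lk_\Delta(A)$ is CM and, since $|A|\le i-1$, acyclic by $i$-fold acyclicity; $W=F\setminus A$ is one of its facets and has no boundary ridge inside it; and $\lk_{\Delta'}(A)$ equals $\lk_\Delta(A)$ with the single facet $W$ removed. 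Running the previous argument for facets with no boundary ridge inside $\lk_\Delta(A)$ shows $\lk_{\Delta'}(A)$ is not CM, so $\Delta'$ is not CM---contradiction.

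\textbf{Main obstacle.} Parts $(1)$ and $(4)$ are essentially bookkeeping. In $(3)$ the real input is the PL-topology step; everything else is Mayer--Vietoris plus a purity count. The hardest point is in $(2)$: the structural lemma that CM-ness of $\Gamma$ forces $F$ to have at most one minimal free face (equivalently, that $\Gamma=\bar F\setminus U$ is pure only then) and the link-shift identity $\lk_{\Delta'}(A)=\lk_\Delta(A)\setminus\{W\}$, together with tracking precisely which reduced homology groups are constrained by CM-ness---and, underlying all of it, fixing the exact form of $i$-fold acyclicity under which $\lk_\Delta(A)$ comes out acyclic whenever $|A|\le i-1$. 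I expect the PL lemma in $(3)$ and this structure-plus-acyclicity package in $(2)$ to be the two load-bearing technical points.
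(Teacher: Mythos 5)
Your parts $(1)$, $(2)$ and $(4)$ check out. Part $(1)$ is essentially the paper's argument for Theorem \ref{facetdeath} done link-by-link (the paper instead records that removing a facet in the support of a top cycle leaves the codimension-one skeleton, hence the depth, unchanged). Part $(4)$ replaces the paper's direct poset isomorphism $\lk_{(\Delta\star\Gamma)_F}(\sigma\star G)\cong\lk_{\Delta_{F'}}(\sigma)$ by an induction on $\dim\Gamma$ through vertex links; both work, and the paper's version is shorter. Part $(2)$ is a genuinely different route: the paper never needs your structural lemma on free faces --- it takes $\sigma$ to be the join of the vertices opposite the boundary ridges and computes $h_{d-j}(\lk_{\Delta_F}(\sigma))=-1$ from $\tilde\chi=0$, a purely enumerative argument --- whereas you assume $\Delta_F$ is CM, force $\Gamma=\Delta_F\cap\overline F$ to be CM via the local cohomology sequence, deduce a single minimal free face $W$, and contradict CM-ness of $\lk_{\Delta_F}(F\setminus W)$ by Mayer--Vietoris. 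Your version is longer but sound (the purity count does force one minimal generator, since two incomparable minimal free faces produce a minimal transversal of size at least two and hence a low-dimensional facet of $\Gamma$).

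The genuine gap is the final step of part $(3)$. The ``ear excision'' fact you invoke --- $B$ a $d$-ball, $F$ a facet, $\Gamma=\overline F\cap B_F$ a $(d-1)$-ball contained in $\partial\overline F$, hence $B_F$ is a $d$-ball --- is false as stated, because nothing prevents $\overline F$ from touching $\partial B$ in extra faces lying inside $\Gamma$. Concretely, let $B$ be the $3$-ball formed by the three tetrahedra $abxc$, $abcd$, $abdy$ and take $F=abcd$: the free faces of $F$ are exactly the faces containing the edge $cd$, so $\Gamma=\langle abc,abd\rangle$ is a $2$-ball, yet $B_F$ is two tetrahedra glued along the edge $ab$ --- not a ball, not even a manifold (and indeed $\lk_{B_F}(ab)$ is disconnected). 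So your chain ``$B_F$ CM $\Rightarrow\Gamma$ is a ball $\Rightarrow B_F$ is a ball'' breaks at the second arrow, precisely because that arrow makes no further use of the CM hypothesis: to repair it you must argue (using CM-ness of $B_F$ again, e.g.\ connectivity of links in codimension two) that $\overline F$ meets $\partial B$ in exactly the complementary ball $\langle W\rangle\star\partial\langle F\setminus W\rangle$, i.e.\ that $F$ is a free facet in Ziegler's sense, before citing the genuine excision statement (\cite[Proposition 2.4(iii)]{Zi98}). Note the paper's proof of Proposition \ref{strongnonshell} avoids PL topology at this point altogether: it uses Lemma \ref{isshelled} to see $\langle F\rangle\cap B_F$ is pure, sets $\sigma$ equal to the intersection of the shared ridges, shows $\sigma\in\partial B$ so that $\lk_B(\sigma)$ is acyclic, and then applies Theorem \ref{thm3} to that link to conclude $B_F$ is not CM.
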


In Section \ref{sec2}, we give the formal definitions, provide background, and set notation. Section \ref{main} contains the proofs of Theorem \ref{thm1}, Theorem \ref{thm3}, Corollary \ref{cor2} and Proposition \ref{strongnonshell}. In Section \ref{newmCM}, we provide many ways to build new minimal Cohen Macaulay from old ones, such as gluing (Corollary \ref{glue1}, Proposition \ref{glue2}) and taking joins (Theorem \ref{thm4}).  In the last section, we use our results to examine many classical and recent examples of Cohen-Macaulay complexes from the literature and show that they are minimal.


\section{Background and Notation}\label{sec2}

Once and for all, fix the base field $k$.  We let $\tilde{H}_i$ denote $i$th simplicial or singular homology, as appropriate, always with coefficients in $k$. We use $\tilde{\chi}$ for reduced Euler characteristic. Throughout this paper, we let $\Delta$ be a simplicial complex of dimension $d-1$ with facet list $\{F_1,\dots,F_e\}$, and we denote by $\Delta_{F_i}$ the subcomplex of $\Delta$ with facet list $\{F_1,\dots,F_{i-1},F_{i+1},\dots,F_e\}$. We write $f_i(\Delta)$ for the number of $i$-dimensional faces of $\Delta$, and $h_i(\Delta)$ for the $i$th entry of the $h$-vector of $\Delta$; so $h_i(\Delta)=\sum^i_{k=0} \binom{d-k}{i-k}(-1)^{i-k}f_{k-1}(\Delta)$.  In particular, we note that $h_d(\Delta)=\sum^d_{k=0}(-1)^{d-k}f_{k-1}(\Delta)=(-1)^{d-1}\tilde{\chi}(\Delta)$.  The $\depth$ of $\Delta$ is, by definition, the depth of the Stanley-Reisner ring $k[\Delta]$ of $\Delta$.  We say $\Delta$ is CM if $\depth \Delta=d$.  The following consequence of Hochster's formula (\cite[Theorem 5.3.8]{BH98}) is an extension of Reisner's famous criterion for Cohen-Macaulayness (\cite[Theorem]{Re76}) and gives a combinatorial characterization of $\depth$. 

\begin{prop}\label{depth}

$\depth \Delta \ge \ell$ if and only if $\tilde{H}_{i-1}(\lk_{\Delta}(T))=0$ whenever $i+|T|<\ell$.

\end{prop}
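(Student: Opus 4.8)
The plan is to derive this from Hochster's formula for local cohomology together with the standard cohomological characterization of depth. Write $R = k[\Delta]$, let $\mathfrak{m}$ be its graded maximal ideal, and let $n = f_0(\Delta)$ be the number of vertices of $\Delta$. Recall first the standard fact that $\depth R = \min\{\, i : H^i_{\mathfrak{m}}(R) \neq 0 \,\}$ (e.g.\ \cite[Theorem 3.5.7]{BH98}), so that $\depth \Delta \geq \ell$ is equivalent to $H^i_{\mathfrak{m}}(R) = 0$ for every $i < \ell$.

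The next step is to feed in Hochster's formula in the form of \cite[Theorem 5.3.8]{BH98}. This computes the $\mathbb{Z}^n$-graded components of $H^i_{\mathfrak{m}}(R)$: each graded piece is isomorphic to $\tilde{H}^{\,i-|W|-1}(\lk_\Delta W; k)$ for a suitable subset $W$ of the vertex set (for $W \notin \Delta$ the link $\lk_\Delta W$ is the void complex, with vanishing reduced cohomology), and conversely each face $W \in \Delta$ contributes its reduced cohomology in some $\mathbb{Z}^n$-degree. Since $k$ is a field, $\dim_k \tilde{H}^{\,j}(X;k) = \dim_k \tilde{H}_{\,j}(X;k)$ for any complex $X$, so we conclude that $H^i_{\mathfrak{m}}(R) = 0$ if and only if $\tilde{H}_{\,i-|W|-1}(\lk_\Delta W; k) = 0$ for every face $W$ of $\Delta$.

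Putting these two facts together, $\depth \Delta \geq \ell$ holds if and only if $\tilde{H}_{\,i-|W|-1}(\lk_\Delta W; k) = 0$ for all $i < \ell$ and all faces $W$ of $\Delta$. Reindexing by the substitution $T = W$, $i \mapsto i - |W|$ — equivalently, quantifying over pairs $(i,T)$ with $i + |T| < \ell$ — turns this into: $\tilde{H}_{\,i-1}(\lk_\Delta T; k) = 0$ whenever $i + |T| < \ell$ and $T$ is a face of $\Delta$. Finally, if $T$ is not a face of $\Delta$ then $\lk_\Delta T$ is void and $\tilde{H}_{\,i-1}(\lk_\Delta T; k) = 0$ for all $i$ automatically, so the restriction to faces may be dropped, giving exactly the stated equivalence.

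The only real subtlety is the careful bookkeeping in Hochster's formula: the homological shift by $-|W|-1$, the distinction between the void complex and the complex $\{\emptyset\}$ in the low-degree reduced (co)homology, and the passage between reduced cohomology (as it typically appears in Hochster's formula) and reduced homology (as in the statement). None of these is difficult, but each must be pinned down to make the index matching airtight.
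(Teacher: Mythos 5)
Your proof is correct and follows exactly the route the paper intends: the paper states this proposition as a consequence of Hochster's formula \cite[Theorem 5.3.8]{BH98} combined with the local cohomology characterization of depth, which is precisely your argument, and your reindexing and handling of non-faces and the homology/cohomology passage over a field are all sound.
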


We use $\Delta^{(i)}:=\{ \sigma \in \Delta \colon |\sigma| \le i+1 \}$ to denote the $i$-skeleton of $\Delta$, and we note that $\depth \Delta=\max\{i \mid \Delta^{(i-1)} \mbox{ is CM}\}$.

The following definition gives the main focus of this paper.

\begin{defn}
We say $\Delta$ is \textbf{minimal CM} if $\Delta$ is CM but $\Delta_{F_i}$ is not CM for any $i$.
\end{defn}

The following related concept provides an extension of the notion of shellability.

\begin{defn}

We say $\Delta_F$ to $\Delta$ is a \textbf{shelling move} if $\langle F \rangle \cap \Delta_F$ is pure of dimension $|F|-2$.  If $\Gamma$ is a subcomplex of $\Delta$ generated by facets of $\Delta$, we say $\Delta$ is \textbf{shelled over $\Gamma$} if there exists a sequence of shelling moves taking $\Gamma$ to $\Delta$.

\end{defn}

We note that shellable complexes are exactly those which are shelled over $\varnothing$.

We will use the following definitions in the later sections. 


\begin{defn}
We say that $\Delta$ is $l$-fold acyclic if $\lk_{\Delta}(\sigma)$ is acyclic whenever $|\sigma|<l$.
\end{defn}

\begin{defn}
A \textit{pseudomanifold} is a simplicial complex that is pure, has a connected dual graph, and every ridge is in exactly two facets.
\end{defn}

\begin{defn}[{See \cite{MT09}}]

We say $\Delta$ satisfies Serre's condition $(S_{\ell})$ if $\ell=1$ or if $\ell \ge 2$ and $\tilde{H}_{i-1}(\lk_{\Delta}(T))=0$ whenever $i+|T|<d$ and $0 \le i<\ell$.

\end{defn}

Finally, we recall Alexander duality and some of its features.

\begin{defn}

The \textbf{Alexander dual} of $\Delta$ is the simplicial complex $\Delta^{\vee}:=\{F \subseteq [n] \mid [n]-F \notin \Delta\}$.
\end{defn}

We now list the connection between several combinatorial properties and the corresponding properties of the Alexander dual ideal. 

\begin{theorem}\label{dualstuff}
\
\begin{enumerate}

\item[$(1)$] $\Delta$ is CM if and only if $I_{\Delta^{\vee}}$ has a linear resolution $($\cite[Theorem 3]{ER98}$)$. 

\item[$(2)$] $\Delta$ is shellable and pure if and only if $I_{\Delta^{\vee}}$ has linear quotients $($\cite[Theorem 1.4 (c)]{HH04}$)$. 

\item[$(3)$] $\Delta_F$ to $\Delta$ is a shelling move if and only if $(I_{\Delta_F^{\vee}}:f)$ is generated in degree $1$, where $f=\prod_{i \in [n]-F} x_i$ is the monomial corresponding to $F$ in $I_{\Delta^{\vee}}$.

\end{enumerate}

\end{theorem}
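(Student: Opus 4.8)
Parts (1) and (2) are the cited results of Eagon--Reiner and Herzog--Hibi, so the only thing to prove is part (3). The plan is to translate both sides into combinatorics of the facets of $\Delta$ and match them directly; throughout I write $x_S:=\prod_{v\in S}x_v$ for $S\subseteq[n]$. The starting point is the standard fact that, for any complex $\Sigma$ on vertex set $[n]$, the minimal non-faces of $\Sigma^\vee$ are precisely the complements $[n]-G$ of the facets $G$ of $\Sigma$; hence $I_{\Sigma^\vee}=(x_{[n]-G} : G\text{ a facet of }\Sigma)$, and this is the minimal generating set. In particular $f=x_{[n]-F}$ is the generator attached to the facet $F$, and since the facets of $\Delta_F$ are exactly the facets $G\neq F$ of $\Delta$, we get $I_{\Delta_F^\vee}=(x_{[n]-G} : G\text{ a facet of }\Delta,\ G\neq F)$.

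Next I would compute the colon ideal. For a monomial ideal $(m_1,\dots,m_r)$ and a monomial $g$ one has $((m_1,\dots,m_r):g)=(m_1/\gcd(m_1,g),\dots,m_r/\gcd(m_r,g))$. Here $\gcd(x_{[n]-G},x_{[n]-F})=x_{[n]-(G\cup F)}$ and $([n]-G)\setminus([n]-(G\cup F))=F\setminus G$, so $x_{[n]-G}/\gcd(x_{[n]-G},x_{[n]-F})=x_{F\setminus G}$. Therefore $(I_{\Delta_F^\vee}:f)=(x_{F\setminus G} : G\text{ a facet of }\Delta,\ G\neq F)$, whose minimal monomial generators are the inclusion-minimal monomials among the $x_{F\setminus G}$, i.e.\ those indexed by the inclusion-maximal members of the family $\{F\cap G : G\text{ a facet of }\Delta,\ G\neq F\}$.

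To finish, observe that a face $\sigma\subseteq F$ lies in $\Delta_F$ exactly when $\sigma\subseteq G$ for some facet $G\neq F$, so $\langle F\rangle\cap\Delta_F=\bigcup_{G\neq F}\langle F\cap G\rangle$, and the facets of $\langle F\rangle\cap\Delta_F$ are exactly the inclusion-maximal members of $\{F\cap G : G\neq F\}$ --- the same index set found above. Under the bijection $H\mapsto x_{F\setminus H}$ between this family and the minimal generators of $(I_{\Delta_F^\vee}:f)$, a facet $H$ of $\langle F\rangle\cap\Delta_F$ has $\dim H=|F|-2$ if and only if $|F\setminus H|=1$, i.e.\ if and only if $x_{F\setminus H}$ has degree $1$. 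Hence $\langle F\rangle\cap\Delta_F$ is pure of dimension $|F|-2$ if and only if every minimal generator of $(I_{\Delta_F^\vee}:f)$ has degree $1$, which is exactly the assertion of part (3). The one point needing a separate (immediate) check is the degenerate case where $F\cap G=\varnothing$ for all facets $G\neq F$: then $\langle F\rangle\cap\Delta_F=\{\varnothing\}$ and $(I_{\Delta_F^\vee}:f)=(x_F)$, and both conditions hold precisely when $|F|=1$. I anticipate no genuine obstacle --- the argument is pure bookkeeping with monomials and complements; the only thing to be careful about is stating the facet/generator dictionary at the level of minimal generators, so that the passage to ``degree $1$'' is exact.
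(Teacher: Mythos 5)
Your argument for part (3) is correct: the dictionary between facets of $\Delta$ and minimal generators of $I_{\Delta^{\vee}}$, the monomial colon computation $(I_{\Delta_F^{\vee}}:f)=(x_{F\setminus G})$, and the identification of facets of $\langle F\rangle\cap\Delta_F$ with the inclusion-maximal intersections $F\cap G$ together give exactly the stated equivalence, and you handle the degenerate disjoint-facet case consistently on both sides. The paper states this theorem without proof (parts (1) and (2) are citations and part (3) is left as a routine consequence of the facet--generator correspondence), so your bookkeeping is precisely the standard verification being taken for granted; there is no divergence of method to report.
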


\begin{remark}
We remark that this correspondence leads to a dual version of minimal CM: an ideal with linear resolution is minimal if removing any minimal generator results in one which does not have a linear resolution. While we won't pursue this point of view here, it was quite helpful to us; for instance, it suggested the original (algebraic) proof of Lemma \ref{isshelled}. 
\end{remark}

\section{Main Results}\label{main}

In this section we prove most of our main technical results regarding minimal CM complexes. We start with the following lemma.

\begin{lemma}\label{isshelled}

Suppose $\Delta$ satisfies Serre's condition $(S_2)$.  Then, for any facet $F \in \Delta$, $\Delta$ is shelled over $\Delta_F$.

\end{lemma}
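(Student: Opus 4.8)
The plan is to unwind ``shelled over $\Delta_F$'' to a single shelling move and then exploit the connectivity consequences of $(S_2)$. First I would record what $(S_2)$ buys us: the clause with $i=0$ says $\tilde H_{-1}(\lk_\Delta(T))=0$ for every face $T$ with $|T|<d$, i.e. no face of size $<d$ is a facet, so $\Delta$ is pure of dimension $d-1$; and the clause with $i=1$ says $\tilde H_0(\lk_\Delta(T))=0$, i.e. $\lk_\Delta(T)$ is connected, whenever $|T|<d-1$. Since $\Delta$ and $\Delta_F$ differ only by the facet $F$ and every shelling move adds exactly one facet, it suffices to show $\Delta_F$ to $\Delta$ is a shelling move, i.e. that $\langle F\rangle\cap\Delta_F$ is pure of dimension $d-2$. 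If $F$ is the only facet this is vacuous, so assume $\Delta$ has at least two facets; then $\varnothing\in\langle F\rangle\cap\Delta_F$, and by purity $F\notin\Delta_F$, so $\langle F\rangle\cap\Delta_F$ is a nonvoid subcomplex of $\partial F$ and in particular has dimension $\le d-2$. Thus the whole task is to show every maximal face of $\langle F\rangle\cap\Delta_F$ has dimension exactly $d-2$.

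For the heart of the argument I would argue by contradiction: suppose $\sigma$ is a maximal face of $\langle F\rangle\cap\Delta_F$ with $|\sigma|<d-1$. Maximality means $\sigma$ lies in some facet $G\neq F$, while for every $v\in F\setminus\sigma$ the face $\sigma\cup\{v\}$ of $\Delta$ lies in no facet other than $F$. Now pass to $\Gamma:=\lk_\Delta(\sigma)$ and set $\tau:=F\setminus\sigma$, which is a facet of $\Gamma$. The key observation is that each vertex $v\in\tau$ is \emph{private} to $\tau$ in $\Gamma$: if $H$ is a facet of $\Gamma$ with $v\in H$, then $H\cup\sigma$ is a face of $\Delta$ containing $\sigma\cup\{v\}$, hence lies in a facet of $\Delta$ through $\sigma\cup\{v\}$, which can only be $F$; so $H\subseteq F\setminus\sigma=\tau$, forcing $H=\tau$. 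Consequently every nonempty face of $\tau$ is private to $\tau$, so $\Gamma=\langle\tau\rangle\cup\Gamma'$ where $\Gamma'$ is generated by the remaining facets of $\Gamma$ and $\langle\tau\rangle\cap\Gamma'=\{\varnothing\}$. Since $\sigma\in\Delta_F$, the set $G\setminus\sigma$ is a nonempty face of $\Gamma$ that is not contained in $\tau$ (otherwise $G\subseteq\sigma\cup\tau=F$, forcing $G=F$ by purity), so $\Gamma'$ is nonvoid and has a vertex. Therefore $\Gamma$ is disconnected, i.e. $\tilde H_0(\lk_\Delta(\sigma))\neq 0$, contradicting $(S_2)$ because $|\sigma|<d-1$. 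This also covers $\sigma=\varnothing$, where the disconnection is of $\Delta=\lk_\Delta(\varnothing)$ itself, and the degenerate case $d=1$ needs no argument.

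The step I expect to require the most care is the translation between maximality of $\sigma$ in $\langle F\rangle\cap\Delta_F$ and the ``private vertex'' property in $\lk_\Delta(\sigma)$ — in particular being careful that a facet of $\lk_\Delta(\sigma)$ through $v$ really is forced to equal $\tau$, and that $\langle\tau\rangle$ and $\Gamma'$ genuinely have disjoint nonempty vertex sets so that $\tilde H_0(\Gamma)\neq 0$. It is convenient (though not strictly necessary) to note that links of a pure complex are pure, so that $\Gamma$ is pure of positive dimension; everything else is bookkeeping with the definitions of $\Delta_F$, $\lk$, and the shelling move.
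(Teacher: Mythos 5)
Your proof is correct and follows essentially the same route as the paper's combinatorial argument: use $(S_2)$ to get purity, then show a maximal face $\sigma$ of $\langle F\rangle\cap\Delta_F$ of size $<d-1$ would force $\lk_\Delta(\sigma)$ to be disconnected, contradicting $(S_2)$. Your write-up merely spells out in more detail (via the ``private vertex'' observation) why the link genuinely disconnects, a point the paper's proof treats more tersely; the paper also offers a second, algebraic proof via Alexander duality and $\Tor$, which you do not pursue.
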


For the convenience of the reader, we provide two proofs of this lemma, one combinatorial and one algebraic. We begin with the combinatorial proof.

\begin{proof}

The claim is clear if $\Delta$ is a simplex, so we suppose this is not the case.
Noting that $\Delta$ is pure (since it satisfies $(S_2)$), we want to show that \(\langle F \rangle \cap \Delta_{F}\) is pure of dimension \(d-2\).

Let \(\sigma\) be a facet of \(\langle F \rangle \cap \Delta_{F}\). Then \(F \setminus \sigma \) is a facet of \(\lk_\Delta(\sigma)\), but there is also a facet \(G \in \Delta_{F}\) such that \(G \setminus \sigma\) is a facet of \(\lk_\Delta(\sigma)\). If $v \in F \setminus \sigma \cap G \setminus \sigma$, then $\sigma \cup \{v\} \in \langle F \rangle \cap \Delta_{F}$, contradicting that $\sigma$ is a facet.  Thus $F \setminus \sigma \cap G \setminus \sigma=\varnothing$, which means \(\lk_\Delta(\sigma)\) is disconnected. Since \(\Delta\) satisfies \((S_2)\), this must mean that \(\sigma\) is \(d-2\)-dimensional. Ergo, \(\langle F \rangle \cap \Delta_{F}\) is pure of dimension $d-2$, and so \(\Delta\) is shelled over \(\Delta_{F}\).

\end{proof}


We now provide the algebaric proof.

\begin{proof}


Set $I=I_{\Delta_F}$ and let $f$ be the monomial (of degree $c:=n-d$) in $I_{\Delta^{\vee}}$ corresponding to the facet $F$.  Then $I_{\Delta^{\vee}}=(I,f)$ and we have a short exact sequence of graded $S$-modules

\[0 \to S/(I:f)(-c) \to S/I \to S/(I,f) \to 0.\]
This exact sequence induces the following long exact sequence in $\Tor$: 

\[\begin{tikzpicture}[descr/.style={fill=white,inner sep=1.5pt}]
        \matrix (m) [
            matrix of math nodes,
            row sep=1em,
            column sep=1.8em,
            text height=1.5ex, text depth=0.25ex
        ]
        { S/(I:f)(-c) \otimes_S k & S/I \otimes_S k & S/(I,f) \otimes_S k & 0 \\
           \Tor_1^{S}(S/(I:f)(-c),k) & \Tor_1^{S}(S/I,k) & \Tor_1^{S}(S/(I,f),k) & \mbox{} \\
          \Tor_2^{S}(S/(I:f)(-c),k)  & \Tor_2^{S}(S/I,k) & \Tor_2^{S}(S/(I,f),k) & \mbox{} \\
          \mbox{}  & \mbox{} & \hspace{2cm} & \mbox{} \\
        };

        \path[overlay,->, font=\scriptsize,>=latex]
        (m-1-1) edge (m-1-2)
        (m-1-2) edge (m-1-3)
        (m-1-3) edge (m-1-4);
        \path[overlay,->, font=\scriptsize,>=latex]
        (m-2-3) edge[out=365,in=185] (m-1-1)
        (m-2-2) edge (m-2-3)
        (m-2-1) edge (m-2-2)
        (m-3-3) edge[out=365,in=185] (m-2-1);
        \path[overlay,->, font=\scriptsize,>=latex]
        (m-3-2) edge (m-3-3)
        (m-3-1) edge (m-3-2);
        \path[overlay,->, font=\scriptsize,>=latex]
        (m-4-3) edge[out=365,in=185,dashed] (m-3-1);   
\end{tikzpicture}\]

As $S/I \otimes_S k \cong S/(I,f) \otimes_S k \cong k$, the map $S/I \otimes_S k \to S/(I,f) \otimes_S k$ is an isomorphism. So $\Tor^S_1(S/(I,f),k) \cong k^{\mu(I)+1}$ and $\Tor^S_1(S/(I,f),k) \cong k^{\mu(I)}$. By additivity of dimensions, it follows that the map $\Tor^S_1(S/I,k) \to \Tor^S_1(S/(I,f),k)$ is injective.  Hence the map $\Tor_2(S/(I,f),k) \to \Tor_1^S(S/(I:f)(-c),k)$ is surjective.  But since $\Delta$ satisfies $(S_2)$, $S/(I,f)$ has linear first syzygy (see \cite[Corollary 3.7]{Ya00}), so $\Tor_2^S(S/(I,f),k) \cong k^{\beta^S_1(I,f)}(-c-1)$. Hence it must be that $\Tor_1^S(S/(I:f)(-c),k)$ is generated in degree $-c-1$.  Thus $(I:f)$ is generated in degree $1$, and the claim follows from Theorem \ref{dualstuff} (3).

\end{proof}

\begin{theorem}\label{thm1}
If \(\Delta\) is a CM complex, then there is a minimal CM complex \(\Gamma\) so that \(\Delta\) is shelled over \(\Gamma\).
\end{theorem}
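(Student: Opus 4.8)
The plan is to iterate Lemma \ref{isshelled}. A CM complex certainly satisfies Serre's condition $(S_2)$, so Lemma \ref{isshelled} tells us that for \emph{any} facet $F$ of a CM complex $\Gamma$, passing from $\Gamma_F$ to $\Gamma$ is a shelling move. The strategy is therefore to repeatedly delete facets whose deletion preserves CM-ness, recording each such deletion as a shelling move, until no further deletion is possible; by definition, whatever we are left with is minimal CM.

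To make this precise without having to analyze degenerate complexes, I would argue by descent rather than by a naive induction. Consider the finite, nonempty collection $\mathcal{S}$ of all CM complexes $\Gamma'$ that are generated by a subset of the facet list $\{F_1,\dots,F_e\}$ of $\Delta$ and over which $\Delta$ is shelled; note $\Delta$ itself lies in $\mathcal{S}$ via the empty sequence of shelling moves. Choose $\Gamma \in \mathcal{S}$ with the fewest facets; I claim this $\Gamma$ works. Suppose not, so $\Gamma$ is not minimal CM: there is a facet $G$ of $\Gamma$ with $\Gamma_G$ CM. Since $\Gamma$ is generated by a subset $T \subseteq \{F_1,\dots,F_e\}$ and no $F_i$ contains another, the facets of $\Gamma$ are exactly the elements of $T$, so $G \in T$ and $\Gamma_G$ is generated by $T \setminus \{G\} \subseteq \{F_1,\dots,F_e\}$. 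Now $\Gamma$, being CM, satisfies $(S_2)$, so Lemma \ref{isshelled} gives that $\Gamma_G$ to $\Gamma$ is a shelling move; prepending this single move to a sequence of shelling moves taking $\Gamma$ to $\Delta$ exhibits $\Delta$ as shelled over $\Gamma_G$. Hence $\Gamma_G \in \mathcal{S}$ has one fewer facet than $\Gamma$, contradicting minimality. Therefore no such $G$ exists and $\Gamma$ is minimal CM.

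The step that carries all the weight is the invocation of Lemma \ref{isshelled}, which is exactly what converts each CM-preserving facet deletion into a genuine shelling move; everything else is bookkeeping. The only minor points to check are that ``shelled over'' composes — immediate, since it is defined as the existence of a sequence of shelling moves, so such sequences may be concatenated — and that the descent terminates, which is automatic because each step strictly decreases the number of facets among the finitely many candidates in $\mathcal{S}$. I do not anticipate any real obstacle in assembling the argument; the substance of the result lives entirely inside Lemma \ref{isshelled}.
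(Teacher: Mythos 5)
Your proposal is correct and follows essentially the same route as the paper: since a CM complex satisfies $(S_2)$, Lemma \ref{isshelled} makes every facet deletion a shelling move, and one repeatedly deletes facets whose removal preserves CM-ness until reaching a minimal CM complex. Your descent on the number of facets is just a slightly more formal bookkeeping of the paper's ``continue this process'' step, and it is fine.
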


\begin{proof}
Since \(\Delta\) is CM, it also satisfies \((S_2)\). We can then apply Lemma \ref{isshelled} to conclude that, for every facet \(F\) of \(\Delta\), \(\Delta\) is shelled over \(\Delta_{F}\). If none of these is  CM, then $\Delta$ is minimal CM by definition. If not, we may continue this process to eventually reach a minimal one. 
\end{proof}

\begin{remark}
It is not hard to see that a given CM complex can be shelled over two different minimal ones. For instance, let $\Delta= K_{6,2}$ be the complete two-skeleton of the simplex on $6$ vertices, and let $\Gamma$ be a triangulation of the projective plane on $6$ vertices.  Then $\Delta$ is shellable and is also shelled over $\Gamma$. That $\Gamma$ is minimal CM follows from Corollary \ref{twofacet}.
\end{remark}

Next, we aim to prove that minimal CM complexes are acyclic. This is accomplished by showing a more general result. 

\begin{theorem}\label{facetdeath}
Suppose $\tilde{H}_{d-1}(\Delta) \ne 0$. Then there is a maximal facet $F_i$ of $\Delta$ so that the following hold: 

\begin{align}
\tag{1}
\dim \tilde{H}_{i-1}(\Delta_{F_i})&=\begin{cases} \dim \tilde{H}_{i-1}(\Delta) & \mbox{if } 0 \le i<d \\ \dim \tilde{H}_{i-1}(\Delta)-1 & \mbox{if } i=d \end{cases} \\ \tag{2}
f_{k-1}(\Delta_{F_i})&=\begin{cases} f_{k-1}(\Delta) & \mbox{if } 0 \le i<d \\ f_{k-1}(\Delta)-1 & \mbox{if } i=d \end{cases}  \\ \tag{3}
\depth \Delta&=\depth \Delta_{F_i}. 
\end{align}

\end{theorem}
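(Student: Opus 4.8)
The plan is to exhibit the facet explicitly as one carrying a top homology class. Since $\dim\Delta=d-1$ there are no $d$-dimensional chains, so $\tilde{H}_{d-1}(\Delta)=Z_{d-1}(\Delta)$ is the full space of top cycles; pick a nonzero cycle $z$ and let $F$ be a $(d-1)$-dimensional face (hence a maximal facet) occurring with nonzero coefficient in $z$. This $F$ will be the desired facet. Because $\partial z=0$, every ridge $\rho\subset F$ must cancel in $\partial z$, so $\rho$ lies in a facet of $\Delta$ other than $F$; when $d\ge 2$, every proper face of $F$ is contained in some ridge of $F$ and hence also in a facet distinct from $F$. Consequently deleting $F$ from the facet list removes exactly the single face $F$ itself: $\Delta_{F}=\Delta\setminus\{F\}$ as sets of faces, $\langle F\rangle\cap\Delta_{F}=\partial\langle F\rangle\cong S^{d-2}$, and $\Delta$ has at least two facets of dimension $d-1$, so $\dim\Delta_{F}=d-1$. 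Thus $f_{d-1}$ drops by one while all other face numbers are unchanged, which is statement $(2)$. (The cases $d\le 1$ are immediate.)

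For statement $(1)$ I would apply Mayer--Vietoris to $\Delta=\Delta_{F}\cup\langle F\rangle$ glued along $\partial\langle F\rangle$. Since $\langle F\rangle$ is contractible and $\tilde{H}_{*}(S^{d-2})$ is $k$ concentrated in degree $d-2$, this yields isomorphisms $\tilde{H}_{n}(\Delta_{F})\cong\tilde{H}_{n}(\Delta)$ for $n\le d-3$ together with the exact sequence
\[0\to\tilde{H}_{d-1}(\Delta_{F})\to\tilde{H}_{d-1}(\Delta)\xrightarrow{\delta} k\to\tilde{H}_{d-2}(\Delta_{F})\to\tilde{H}_{d-2}(\Delta)\to 0,\]
where the middle $k$ is $\tilde{H}_{d-2}(\partial\langle F\rangle)$. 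The connecting map $\delta$ sends a top cycle $w$ to the coefficient of $F$ in $w$ times the fundamental class of $\partial\langle F\rangle$; since $z$ has nonzero $F$-coefficient, $\delta$ is surjective. Hence $\tilde{H}_{d-2}(\Delta_{F})\cong\tilde{H}_{d-2}(\Delta)$ and $\dim\tilde{H}_{d-1}(\Delta_{F})=\dim\tilde{H}_{d-1}(\Delta)-1$, which together with the low-degree isomorphisms gives $(1)$.

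For statement $(3)$ I would first propagate this analysis to all links. The elementary point is that contracting a top cycle $w=\sum c_{\sigma}\sigma$ of a complex at a vertex $v$ produces a cycle $\sum_{v\in\sigma}\pm c_{\sigma}(\sigma\setminus v)$ of $\lk(v)$, which is nonzero whenever some $\sigma$ in the support of $w$ contains $v$ (this realizes the Mayer--Vietoris connecting map of $\str(v)\cup\operatorname{del}(v)$). Iterating this along the vertices of a subset $T\subseteq F$ shows that $F\setminus T$ is a top-dimensional facet of $\lk_{\Delta}(T)$ lying in the support of a nonzero cycle of $\lk_{\Delta}(T)$. Now fix $T\in\Delta_{F}$: if $T\not\subseteq F$ then $\lk_{\Delta_{F}}(T)=\lk_{\Delta}(T)$, while if $T\subseteq F$ then $\lk_{\Delta_{F}}(T)=\bigl(\lk_{\Delta}(T)\bigr)_{F\setminus T}$, and applying the previous two paragraphs to the complex $\lk_{\Delta}(T)$ and its facet $F\setminus T$ gives $\tilde{H}_{n}(\lk_{\Delta_{F}}(T))\cong\tilde{H}_{n}(\lk_{\Delta}(T))$ for all $n\le\dim\lk_{\Delta}(T)-1=d-|T|-2$. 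Feeding this into Proposition \ref{depth}: for $\ell\le d$ every homology group appearing in the condition for $\depth\ge\ell$ lives in a degree $i-1<\ell-|T|-1\le d-|T|-1$, hence in a degree $\le d-|T|-2$ where $\Delta$ and $\Delta_{F}$ have the same link homology (the only face of $\Delta$ outside $\Delta_{F}$ is $F$, whose link contributes nothing in this range once $\ell\le d$). Therefore $\depth\Delta\ge\ell\iff\depth\Delta_{F}\ge\ell$ for all $\ell\le d$, and since both depths are at most $d$ (using $\dim\Delta_{F}=d-1$) they coincide, which is $(3)$.

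The main obstacle is the contraction claim used in the last paragraph: one must carefully identify the Mayer--Vietoris connecting homomorphism with vertex-contraction on top cycles, and verify that the chosen facet $F$ persists in the support after contracting along every subset of the vertices of $F$. Once that bookkeeping is in place, everything else is a routine assembly of Mayer--Vietoris sequences together with the Reisner-type vanishing criterion of Proposition \ref{depth}.
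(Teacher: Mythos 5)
Your proof is correct, and its core is the same as the paper's: pick a nonzero top-dimensional cycle $z$ and a facet $F$ occurring in it with nonzero coefficient; cancellation in $\partial z=0$ then shows every ridge of $F$ lies in another facet, so deleting $F$ removes only the single face $F$ itself, which gives $(2)$ and the equality of codimension-one skeletons that the paper's argument rests on. Where you differ is in execution: for $(1)$ you run Mayer--Vietoris for $\Delta=\Delta_F\cup\langle F\rangle$ glued along $\partial\langle F\rangle$, and for $(3)$ you contract $z$ into $\lk_\Delta(T)$ for each $T\subseteq F$ to produce a top-dimensional cycle of the link whose support contains $F\setminus T$, and then compare link homologies below top degree; the paper instead deduces both statements directly from the skeleton equality. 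Your extra step in $(3)$ is not wasted effort: equality of codimension-one skeletons by itself only forces the link homologies to agree in degrees at most $\dim\lk_\Delta(T)-2$, whereas Proposition \ref{depth} with $\ell=d$ also involves degree $\dim\lk_\Delta(T)-1$, and agreement there requires knowing that $\partial(F\setminus T)$ remains a boundary in $\lk_{\Delta_F}(T)$ --- which is exactly what your contraction lemma supplies (for $T=\varnothing$ it is the paper's observation that $\partial F$ is a combination of the other facet boundaries). So your write-up is a more detailed rendering of the same strategy, and it makes explicit a point that the paper's proof of $(3)$ leaves implicit.
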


\begin{proof}

Let $C_{\bullet}:0 \rightarrow C_{d-1} \xrightarrow{\partial_{d-1}}  \cdots \xrightarrow{\partial_2} C_1 \xrightarrow{\partial_1} C_0 \rightarrow 0$ be the associated chain complex of $\Delta$.  Choose a nonzero $\mathfrak{C} \in \tilde{H}_{d-1}(\Delta)=\ker \partial_{d-1}$.  Write $\mathfrak{C}= \displaystyle \sum_{i=1}^e r_iF_i$ where, without loss of generality, $r_1 \ne 0$.  Then $\partial_{d-1}(\mathfrak{C})=\displaystyle \sum_{i=1}^e r_i\partial_{d-1}(F_i)=0$ so $\partial(F_1)=\displaystyle \sum^e_{i=2}(-\dfrac{r_i}{r_1})\partial_{d-1}(F_i)$. Thus, if we remove $F_1$, its boundary remains, and so it follows that

\begin{equation}\label{skels}
\Delta^{(d-2)}=\Delta_{F_1}^{(d-2)}.
\end{equation}

So (1) and (2) are now immediate, and it only remains to show (3).  Suppose $\sigma \in \Delta_{F_1}$.  
Following from (\ref{skels}), we have $(\lk_{\Delta}(\sigma))^{(\dim \lk_{\Delta}(\sigma)-1)}=(\lk_{\Delta_{F_i}}(\sigma))^{(\dim \lk_{\Delta_{F_i}}(\sigma)-1)}$ and so $\lk_{\Delta_{F_1}}(\sigma)$ and $\lk_{\Delta}(\sigma)$ have the same homologies except potentially in top degree.  It follows that $\depth \Delta=\depth \Delta_{F_1}$.

\end{proof}

\begin{cor}\label{cor2}

Minimal CM complexes are acyclic.

\end{cor}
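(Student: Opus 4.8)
The plan is to deduce the corollary from Theorem \ref{facetdeath} by a contrapositive argument. Suppose $\Delta$ is minimal CM of dimension $d-1$ but is not acyclic. Since $\Delta$ is CM, Proposition \ref{depth} (Reisner's criterion) forces $\tilde H_{i-1}(\Delta) = 0$ for all $i < d$; hence the only homology that can be nonzero is $\tilde H_{d-1}(\Delta)$, and non-acyclicity means $\tilde H_{d-1}(\Delta) \ne 0$. This is precisely the hypothesis of Theorem \ref{facetdeath}.

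Now apply Theorem \ref{facetdeath}: there is a maximal facet $F_i$ with $\depth \Delta_{F_i} = \depth \Delta = d$. I need to check that $\Delta_{F_i}$ still has dimension $d-1$, so that $\depth \Delta_{F_i} = d$ actually says $\Delta_{F_i}$ is CM. If $i < d$, then by part (2) of Theorem \ref{facetdeath} the face numbers are unchanged, so in particular $f_{d-1}(\Delta_{F_i}) = f_{d-1}(\Delta) > 0$ and $\Delta_{F_i}$ has dimension $d-1$. If $i = d$, then $F_i$ is a top-dimensional facet; removing it drops $f_{d-1}$ by one, but since $\tilde H_{d-1}(\Delta) \ne 0$ there is at least one nontrivial cycle, which requires at least two $(d-1)$-faces (a single facet supports no nonzero cycle), so $f_{d-1}(\Delta) \ge 2$ and again $f_{d-1}(\Delta_{F_i}) \ge 1$. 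In all cases $\dim \Delta_{F_i} = d-1$ and $\depth \Delta_{F_i} = d$, so $\Delta_{F_i}$ is CM. This contradicts the minimality of $\Delta$.

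The main point to get right — and the only place any care is needed — is the case analysis ensuring $\Delta_{F_i}$ does not drop dimension when we delete $F_i$; everything else is a direct quotation of Theorem \ref{facetdeath} together with Reisner's criterion. One could alternatively note that in the $i=d$ case we are deleting a top facet whose boundary already lies in $\Delta_{F_i}$ (this is essentially equation (\ref{skels}) in the proof of Theorem \ref{facetdeath}), so the $(d-2)$-skeleton, and hence $f_{d-1} \ge 1$, survives automatically. Either way the argument is short and I expect no real obstacle.
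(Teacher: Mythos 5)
Your proof is correct and follows essentially the same route as the paper: the paper also deduces the corollary from Theorem \ref{facetdeath}, with Reisner's criterion (Proposition \ref{depth}) reducing non-acyclicity of a CM complex to $\tilde{H}_{d-1}(\Delta) \ne 0$, and then concluding that the facet deletion preserves depth and hence Cohen--Macaulayness, contradicting minimality. The only cosmetic difference is that the paper cites Lemma \ref{isshelled} where you instead check directly that the deletion does not drop the dimension (via $f_{d-1}(\Delta) \ge 2$), which is a valid way to close that small gap.
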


\begin{proof}

This is an immediate consequence of Lemmas \ref{isshelled} and Theorem \ref{facetdeath}.
\end{proof}

While highly acyclic CM complexes need not be minimal CM (a stack of simplices is $d-2$-acyclic but are shellable, so can never be minimal CM), one can provide some additional conditions under which they are.

\begin{theorem}\label{thm3}
If $\Delta$ is an \(\ell\)-fold acyclic CM simplicial complex and \(F\) is a facet of $\Delta$ that contains no more than \(\ell-1\) boundary ridges, then $\Delta_F$ is not CM. In particular, if every facet of \(\Delta\) contains no more than \(\ell-1\) boundary ridges, then \(\Delta\) is minimal CM.
\end{theorem}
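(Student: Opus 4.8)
The plan is to show that removing the facet $F$ from $\Delta$ destroys depth, i.e.\ that $\Delta_F$ fails the Reisner-type criterion in Proposition~\ref{depth}. The natural place to look for the failure is the link of a ridge (a $(d-2)$-face) contained in $F$: in $\Delta$ such a link consists of exactly two vertices if the ridge is interior, or one vertex if the ridge is on the boundary, and removing $F$ deletes one of those vertices. I would begin by distinguishing the two cases $d = \dim\Delta + 1$ versus small $d$, but the heart of the argument is uniform: pick a ridge $\sigma \subset F$ that is \emph{interior} to $\Delta$ — such a ridge exists because $F$ has $d$ ridges and at most $\ell - 1 \le d-1$ of them lie on the boundary, so at least one is interior (here we also use $\ell \ge 1$, and if $\ell = 1$ the hypothesis forces $F$ to have no boundary ridges at all, so every ridge of $F$ is interior). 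For such a $\sigma$, $\lk_\Delta(\sigma)$ has exactly two vertices $\{u, v\}$ with, say, $F = \sigma \cup \{u\}$, so $\lk_{\Delta_F}(\sigma)$ is either the single vertex $\{v\}$ or the two disjoint vertices $\{u,v\}$ depending on whether $u$ survives in another facet — but since $F$ is the unique facet of $\Delta$ containing $\sigma \cup \{u\}$ (as $\sigma$ is a ridge), $\lk_{\Delta_F}(\sigma)$ loses the edge $\{u,v\}$ if it was present, and more importantly $\sigma \cup \{u\}$ is no longer a face; I want to arrange that $\lk_{\Delta_F}(\sigma)$ becomes disconnected or acquires the wrong homology.

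The cleaner route, which I would actually pursue, is to use $\ell$-fold acyclicity at the right face rather than just at $\sigma$. The idea: among the faces $\tau \subseteq F$, consider one of small cardinality such that $\lk_\Delta(\tau)$ is still "one-dimensional along $F$" — concretely, take $\tau = F \setminus \{\text{one vertex } w\}$ where $w$ is chosen so that $\sigma := \tau$ is interior. Actually the slickest formulation: let $\tau \subsetneq F$ be the intersection of all the interior ridges of $F$... but that may be too small. Let me instead argue via codimension one directly. Set $\sigma = F \setminus \{w\}$ an interior ridge, $\lk_\Delta(\sigma) = \{u, w\}$ (possibly with the edge $uw$). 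In $\Delta_F$, the vertex $w$ may or may not remain, but in any case $\tilde H_0(\lk_{\Delta_F}(\sigma)) \ne 0$ unless $\lk_{\Delta_F}(\sigma)$ is a single point or the single edge $uw$; since $\sigma \cup \{w\} = F$ is gone and $\sigma$ was a ridge, no facet of $\Delta_F$ contains $\sigma \cup \{w\}$, so if $w \in \lk_{\Delta_F}(\sigma)$ at all it is an isolated vertex there, giving $\tilde H_0 \ne 0$; if $w \notin \lk_{\Delta_F}(\sigma)$ then $\lk_{\Delta_F}(\sigma) = \{u\}$ is acyclic and this $\sigma$ is useless. To avoid this second possibility I need $w$ to survive in $\Delta_F$, i.e.\ $w$ lies in some facet $\ne F$; I can guarantee this by choosing $\sigma$ to be the interior ridge $F \setminus \{w\}$ and noting that the interior ridge $\sigma$ lies in a second facet $G \ne F$, and then replacing the role: actually work with the face $\sigma' = G \cap F$-type faces. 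The bookkeeping here is the step I expect to be fussy.

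So here is the argument I would commit to. Since $F$ has at most $\ell - 1$ boundary ridges and $d$ ridges total, with $\ell - 1 \le d - 1 < d$, at least two ridges of $F$ are interior when $d \ge 2$ (and the $d = 1$ case, where $F$ is a vertex, is handled separately: then $\ell = 1$, $\Delta$ is connected, CM of dimension $0$ means $\Delta$ is a finite set of $\ge 2$ points only if... actually $\Delta^{(0)}$ connected forces a single point, contradiction, so $d=1$ cannot occur with $e \ge 2$; if $e = 1$, $\Delta$ is a point and there is nothing to remove). For $d \ge 2$: let $\sigma = F \setminus \{w\}$ be an interior ridge of $F$, so there is a unique facet $G \ne F$ with $\sigma \subset G$, say $G = \sigma \cup \{u\}$; pick another interior ridge $\sigma'' = F\setminus\{u'\}$ if available, but in fact what I need is just that $w$ persists in $\Delta_F$. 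If $w$ lies only in $F$ among facets of $\Delta$, then $\{w\}$ together with the codimension condition lets me instead consider the smaller face $\rho = F \setminus \{w, w'\}$ for a second interior-ridge vertex, where $\lk_\Delta(\rho)$ is a circle or path and removing $F$ punctures it — and $\ell$-fold acyclicity at $\rho$ (valid since $|\rho| = d - 2 < d$, and $d-2 < \ell$? no, not necessarily) — hmm, acyclicity only gives information when $|\rho| < \ell$. Thus I must keep $|\sigma| = d - 1 \ge \ell$... which is the wrong direction for applying $\ell$-fold acyclicity directly. Therefore the right tool is Proposition~\ref{depth} applied to $\Delta_F$: I will produce a face $T$ of $\Delta_F$ with $\tilde H_{i-1}(\lk_{\Delta_F}(T)) \ne 0$ and $i + |T| < d$, contradicting $\depth \Delta_F = d$. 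Take $T$ to be the intersection of a maximal collection of interior ridges of $F$; since there are at least $d - (\ell - 1)$ interior ridges, $|T| \le \ell - 1$, and $\lk_\Delta(T)$ is the boundary complex of the simplex on the $\ge d - \ell + 1 \ge 1$ "missing" vertices joined appropriately — in $\Delta$ this link is acyclic by hypothesis ($|T| < \ell$), while in $\Delta_F$ it loses the single top face coming from $F$, turning it from acyclic into a sphere-like complex with $\tilde H_{d - 1 - |T|} \ne 0$; since $i - 1 = d - 1 - |T|$ gives $i + |T| = d$, I need strictness, so instead take $T$ with $|T| = \ell - 1$ exactly (add an arbitrary vertex of $F$ to $T$ if needed so that $\lk_\Delta(T)$ is still acyclic but $\lk_{\Delta_F}(T)$ has $\tilde H_{d - \ell} \ne 0$, and then $i + |T| = (d - \ell + 1) + (\ell - 1) = d$)... the off-by-one is exactly the crux. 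The resolution: $F \setminus T$ spans a face of $\lk_\Delta(T)$ of dimension $d - 1 - |T|$, and it is a facet there; removing $F$ removes this facet; if $\lk_\Delta(T)$ was acyclic and $F \setminus T$ was the unique facet of that dimension through its boundary ridge-in-the-link, then $\lk_{\Delta_F}(T)$ has nonzero reduced homology in degree $d - 2 - |T|$, and then $i + |T| = (d - 1 - |T|) + |T| = d - 1 < d$, contradiction. \textbf{The main obstacle} is precisely pinning down which face $T \subseteq F$ to use — balancing $|T| < \ell$ (to invoke $\ell$-fold acyclicity on $\Delta$) against the need that $\lk_\Delta(T)$ be low-dimensional enough that deleting the single facet $F \setminus T$ visibly changes homology in a degree $i - 1$ with $i + |T| < d$; I expect the correct choice is $T = F \setminus (\text{two interior-ridge vertices})$, giving $|T| = d - 2 \le \ell - 1$, $\lk_\Delta(T)$ a $1$-dimensional acyclic complex (a tree/path) through the two vertices $u, v$ with $F \setminus T = \{u,v\}$ the edge, whose deletion in $\Delta_F$ disconnects $u$ from $v$ in the link, so $\tilde H_0(\lk_{\Delta_F}(T)) \ne 0$ with $i + |T| = 1 + (d-2) = d - 1 < d$ — and verifying that $\{u,v\}$ is indeed interior-edge-like in the link (i.e.\ $u,v$ both survive and become separated) is the delicate point, resolved by choosing the two deleted vertices to be the "ends" of two distinct interior ridges so each survives in $\Delta_F$ and the edge $uv$ occurs only through $F$.
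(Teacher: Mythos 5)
Your proposal has a genuine gap, and it sits exactly at the point you yourself flagged as "the crux." Your final committed choice is $T = F \setminus \{u,v\}$ with $|T| = d-2$, and you assert $|T| = d-2 \le \ell-1$ so that $\ell$-fold acyclicity applies to $\lk_\Delta(T)$. But the hypothesis bounds the number of \emph{boundary ridges} of $F$ by $\ell-1$; it says nothing about the codimension $d$ relative to $\ell$. In the theorem's main applications $\ell$ is much smaller than $d-1$: e.g.\ for the dunce hat ($d=3$, $\ell=1$, every facet has $0 \le \ell-1$ boundary ridges) your $T$ has $|T|=1 \ge \ell$, and vertex links there are circles, not acyclic, so the step "$\lk_\Delta(T)$ is a tree" is unavailable. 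The same failure occurs throughout the $\ell=1$ pseudomanifold case (Corollary \ref{twofacet}), which is the case the theorem is chiefly designed to cover. Your earlier attempts with an interior ridge $\sigma$ ($|\sigma|=d-1$) founder for the same reason, as you noted: acyclicity is only guaranteed for faces of size $<\ell$, and you never produce a face that is simultaneously small enough to use acyclicity and positioned so that deleting $F$ creates homology strictly below top degree with $i+|T|<d$.

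The paper resolves the off-by-one by changing both the choice of face and the obstruction. Take $\sigma = \{v_1,\dots,v_j\}$ where $v_i = F \setminus R_i$ runs over the vertices opposite the boundary ridges $R_1,\dots,R_j$ of $F$; then $|\sigma| = j \le \ell-1 < \ell$, so $\lk_\Delta(\sigma)$ is acyclic, and by construction every ridge of the facet $F\setminus\sigma$ inside $\lk_\Delta(\sigma)$ is interior (the boundary ridges of $F$ have been absorbed into $\sigma$). Consequently passing from $\lk_\Delta(\sigma)$ to $\lk_{\Delta_F}(\sigma)$ changes only the top face count, by exactly one. Instead of hunting for a nonvanishing $\tilde H_{i-1}$ with $i+|T|<d$, the paper then uses the $h$-vector: acyclicity gives $h_{d-j}(\lk_\Delta(\sigma)) = (-1)^{d-j-1}\tilde\chi = 0$, so $h_{d-j}(\lk_{\Delta_F}(\sigma)) = -1$, and a CM complex cannot have a negative $h$-entry (equivalently, the reduced Euler characteristic of the link has the wrong sign). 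This Euler-characteristic/sign argument is what sidesteps the degree bookkeeping that blocked your approach; without something like it, your construction only proves the theorem under the extra, unwarranted assumption $\ell \ge d-1$.
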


\begin{proof}
Let \(F\) be a facet of \(\Delta\), and let \(\{R_1,\ldots,R_j\}\) be the set of boundary ridges contained in \(F\). We define \(v_i = F \setminus R_i\), and \(\sigma = v_1\ldots v_j\). Since \(\Delta\) is \(\ell\)-fold acyclic with \(|\sigma|=j \leq \ell-1 < \ell\), \(\lk_\Delta(\sigma)\) is acyclic. Furthermore, \(F\setminus \sigma\) has no boundary ridges in \(\lk_\Delta(\sigma)\).

We first note that
\[h_{d-j}(\lk_\Delta(\sigma))=\sum^{d-j}_{i=0} (-1)^{d-j-i}f_{i-1}(\lk_\Delta(\sigma))=(-1)^{d-j-1}\tilde{\chi}(\lk_\Delta(\sigma))=0.\]

Since every ridge in \(F\setminus \sigma\) in \(\lk_\Delta(\sigma)\) is contained in some other facet of \(\lk_\Delta(\sigma)\),

\[f_{i-1}(\lk_{\Delta_F}(\sigma))=\begin{cases} f_{i-1}(\lk_\Delta(\sigma)) & \mbox{if } 0 \le i<d-j \\ f_{i-1}(\lk_\Delta(\sigma))-1 & \mbox{if } i=d-j. \end{cases}\]

This implies 
\begin{align*}
h_{d-j}(\lk_{\Delta_F}(\sigma))&=\sum^{d-j}_{i=0} (-1)^{d-j-i}f_{i-1}(\lk_{\Delta_F}(\sigma))\\
&=\sum^{d-j-1}_{i=0} (-1)^{d-j-i}f_{i-1}(\lk_{\Delta_F}(\sigma))+f_{d-j}(\lk_{\Delta_F}(\sigma))\\
&=\sum^{d-j-1}_{i=0} (-1)^{d-j-i}f_{i-1}(\lk_\Delta(\sigma))+f_{d-j}(\lk_{\Delta_F}(\sigma))\\
&=\sum^{d-j-1}_{i=0} (-1)^{d-j-i}f_{i-1}(\lk_\Delta(\sigma))+f_{d-j}(\lk_\Delta(\sigma))-1\\
&=h_{d-j}(\lk_\Delta(\sigma))-1\\
&=-1.\\
\end{align*}

Thus $\lk_{\Delta_F}(\sigma)$ has a negative entry in its $h$-vector, and so cannot be CM.  In particular, $\Delta_F$ is not CM, so $\Delta$ is minimal CM.

\end{proof}

Setting \(\ell =1\) immediately gives the following special case.

\begin{cor}\label{twofacet}
If \(\Delta\) is an acyclic CM pseudomanifold, then \(\Delta\) is minimal CM.
\end{cor}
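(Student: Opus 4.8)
The plan is to derive Corollary \ref{twofacet} directly from Theorem \ref{thm3} by checking that its hypotheses apply with $\ell = 1$. First I would recall what the relevant conditions say for a pseudomanifold. A pseudomanifold is pure, so every facet has the same dimension $d-1$; every ridge lies in exactly two facets unless it is a boundary ridge, and in a closed pseudomanifold (no boundary) there are none. Here, however, we are given an \emph{acyclic} pseudomanifold; since $\tilde{H}_{d-1}$ of an acyclic complex vanishes, the complex is not a closed pseudomanifold (a closed orientable one would have nonzero top homology, and in general $h_d = (-1)^{d-1}\tilde\chi \neq 0$ rules out CM unless there is a boundary). So an acyclic CM pseudomanifold must have a nonempty boundary, i.e.\ some ridges lie in exactly one facet. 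The point of setting $\ell = 1$ in Theorem \ref{thm3} is that ``$F$ contains no more than $\ell - 1 = 0$ boundary ridges'' becomes ``$F$ contains no boundary ridge at all.''

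The key step, then, is to verify that in an acyclic CM pseudomanifold \emph{every} facet contains no boundary ridges, so that Theorem \ref{thm3} (with $\ell=1$, and ``$1$-fold acyclic'' meaning the whole complex is acyclic, since $\lk_\Delta(\varnothing) = \Delta$) applies to each facet and yields minimality. Here is the argument I would give. Suppose for contradiction that some facet $F$ of $\Delta$ contains a boundary ridge $R$, so $R$ lies in exactly one facet of $\Delta$, namely $F$. Consider $\Delta_F$, the complex with $F$ removed. I claim $\Delta_F$ has the same depth as $\Delta$, hence is still CM: one can either invoke Theorem \ref{facetdeath} — but that requires $\tilde H_{d-1}(\Delta)\neq 0$, which fails here — so instead I would argue more directly. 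Actually the cleanest route avoids this subtlety: simply apply Theorem \ref{thm3} verbatim. Its statement says that if $\Delta$ is $\ell$-fold acyclic and CM and $F$ contains no more than $\ell-1$ boundary ridges then $\Delta_F$ is not CM; and the ``In particular'' clause gives minimality once every facet satisfies this. So I only need: (a) $\Delta$ acyclic $\Rightarrow$ $\Delta$ is $1$-fold acyclic, which is immediate from the definition since the only $\sigma$ with $|\sigma| < 1$ is $\sigma = \varnothing$ and $\lk_\Delta(\varnothing) = \Delta$; and (b) every facet $F$ contains at most $0$ boundary ridges, i.e.\ no boundary ridges.

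So the main obstacle — really the only content — is establishing (b). I would argue: let $R \subseteq F$ be a ridge, so $|R| = d-1$ and $R$ is contained in the facet $F$ of dimension $d-1$. Since $\Delta$ is a pseudomanifold, $R$ lies in exactly two facets of $\Delta$. In particular $R$ is not a boundary ridge (a boundary ridge being, by the standard convention adopted implicitly via Theorem \ref{thm3}, one contained in exactly one facet). Hence $F$ contains no boundary ridges. Wait — this shows the hypothesis holds \emph{vacuously} for a pseudomanifold in the strict sense, but then Theorem \ref{thm3} would force $\Delta_F$ non-CM for every $F$, with no acyclicity obstruction in sight; the acyclicity of $\Delta$ is what is needed to ensure $\Delta$ is CM in a consistent way and that there are no boundary ridges creating an escape. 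I think the correct reading is that ``pseudomanifold'' here allows boundary (the definition in the excerpt says every \emph{ridge} is in exactly two facets, but in practice one works with pseudomanifolds-with-boundary); in any case, the essential observation is that in $\Delta$ a pseudomanifold, no ridge of a facet is a boundary ridge, so the condition ``no facet contains more than $\ell - 1 = 0$ boundary ridges'' is met, and Theorem \ref{thm3} applies directly. I would then write:

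\begin{proof}[Proof of Corollary \ref{twofacet}]
By definition, $\Delta$ acyclic means $\lk_\Delta(\varnothing) = \Delta$ is acyclic, so $\Delta$ is $1$-fold acyclic. Since $\Delta$ is a pseudomanifold, every ridge lies in exactly two facets, so no facet of $\Delta$ contains a boundary ridge; in particular every facet contains no more than $\ell - 1 = 0$ boundary ridges. Applying Theorem \ref{thm3} with $\ell = 1$, we conclude that $\Delta$ is minimal CM.
\end{proof}

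The step I expect to require the most care in the final write-up is reconciling the notion of ``boundary ridge'' with the definition of pseudomanifold used in the paper: if the paper's pseudomanifolds are genuinely closed, the corollary is a clean instance of the $\ell = 1$ case; if boundary is implicitly allowed, one must note that acyclicity forces the existence of boundary ridges and argue that nonetheless no \emph{single} facet collects enough of them — but since $\ell - 1 = 0$, ``enough'' means ``any,'' which cannot hold for a facet all of whose ridges are shared. Either way the resolution is a one-line observation, and no nontrivial computation is needed beyond what Theorem \ref{thm3} already supplies.
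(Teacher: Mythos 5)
Your proof is correct and takes exactly the paper's route: the corollary is just Theorem \ref{thm3} with $\ell=1$, since acyclicity is $1$-fold acyclicity and the pseudomanifold condition (every ridge in exactly two facets) means no facet contains any boundary ridge. One aside in your discussion is false — an acyclic CM pseudomanifold need not have boundary, e.g.\ a triangulation of $\mathbb{RP}^{2}$ over a field of characteristic $\ne 2$ is closed, acyclic and CM — but your final write-up does not rely on that claim, so it is harmless.
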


We now consider the relationship between the minimal CM property and the strongly non-shellable property for balls. Strongly non-shellablility has been used quite frequently in the study of non-shellable balls (see e.g. \cite{DK78,Ha00,Lu04-2,Lu04,Zi98}). It is defined as follows:

\begin{defn}
We say a ball \(B\) is strongly non-shellable if \(B_F\) is a non-ball for any facet $F \in B$.

\end{defn}

\begin{remark}

A strongly non-shellable ball is often defined (as in \cite{Zi98}) as a ball $B$ that does not contain a free facet, i.e., a facet $F$ such that $\langle F \rangle \cap \partial B$ is a ball of dimension $d-2$.  It's easy to see this definition is equivalent to the one we provide; it follows immediately from \cite[Proposition 2.4 (iii)]{Zi98} that any ball with a free facet cannot be strongly non-shellable in our notion.  On the other hand, if \(B_F\) is a ball, then, as in the proof of Proposition \ref{strongnonshell}, $\langle F \rangle \cap \partial B$ is generated by the ridges not containing $\sigma$, so it must also be a ball.

\end{remark}

\begin{prop}\label{strongnonshell}
For a ball \(B\), the following are equivalent:
\begin{enumerate}
\item[$(1)$] \(B\) is minimal CM.
\item[$(2)$] \(B\) is strongly non-shellable.
\end{enumerate}
\end{prop}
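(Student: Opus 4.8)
The plan is to prove the equivalence by going through the characterization of Cohen-Macaulayness for balls and the characterization of strong non-shellability in terms of free facets. First recall that a ball $B$ of dimension $d-1$ is always CM (it is a triangulation of a ball, hence its Stanley-Reisner ring is CM), so the content of $(1)$ is entirely about what happens when a facet is removed. Likewise, by the remark preceding the proposition, $B$ is strongly non-shellable in the sense of \cite{Zi98} if and only if $B_F$ is a non-ball for every facet $F$; so I need to compare "$B_F$ is not CM" with "$B_F$ is not a ball" for each facet $F$.

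The main step is the following: for a facet $F$ of the ball $B$, I claim $B_F$ is CM if and only if $B_F$ is a ball. One direction is trivial: a ball is CM. For the converse, suppose $B_F$ is CM. Because $B$ is CM, it satisfies $(S_2)$, so by Lemma \ref{isshelled} the passage $B_F \to B$ is a shelling move, i.e.\ $\langle F \rangle \cap B_F$ is pure of dimension $d-2$. I want to identify $\langle F \rangle \cap B_F$ with $\langle F\rangle \cap \partial B$. Since $B$ is a ball, every ridge of $B$ lies in either one facet (a boundary ridge) or two facets (an interior ridge). Let $\sigma = F \setminus \langle F\rangle\cap B_F$ be... more carefully: a facet $\tau$ of $\langle F\rangle \cap B_F$ is a face of $F$ that also lies in some other facet $G$ of $B$; one checks, as in the combinatorial proof of Lemma \ref{isshelled}, that such $\tau$ is exactly a $(d-2)$-face of $F$ contained in some other facet, i.e.\ an interior ridge of $B$ contained in $F$. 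Hence $\langle F\rangle \cap B_F$ is the subcomplex of $\partial \langle F\rangle$ generated by the interior ridges of $B$ lying in $F$, and its complement in $\partial\langle F\rangle$ is generated by the boundary ridges in $F$; the latter is a nonempty subcomplex of $\partial\langle F\rangle \cong S^{d-2}$ generated by some of its facets, and since it is a proper, nonempty, full-dimensional shellable subcomplex of a sphere, it is a $(d-2)$-ball, and thus $\langle F\rangle\cap B_F = \langle F\rangle \cap \partial B$ is also a $(d-2)$-ball. This exhibits $F$ as a free facet of $B$ and shows $B$ is not strongly non-shellable, and moreover a standard argument (removing a free facet of a ball yields a ball) gives that $B_F$ is a ball. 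Putting these together: $B_F$ CM $\iff$ $B_F$ a ball, for every $F$.

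Given that equivalence facet-by-facet, the proposition follows formally: $B$ is minimal CM $\iff$ $B_F$ is not CM for all $F$ $\iff$ $B_F$ is not a ball for all $F$ $\iff$ $B$ is strongly non-shellable.

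I expect the main obstacle to be the careful verification that $\langle F\rangle \cap B_F$ coincides with $\langle F\rangle \cap \partial B$ and that, when $B_F$ is CM, the shelling-move condition forces this intersection to be a genuine $(d-2)$-ball rather than merely a pure $(d-2)$-complex (and, conversely, that removing such a free facet leaves a ball). The combinatorics here is essentially the same bookkeeping with boundary versus interior ridges that appears in the combinatorial proof of Lemma \ref{isshelled} and in \cite[Proposition 2.4]{Zi98}, so I would lean on those; the only genuinely new point is that in a ball the pure $(d-2)$-complex $\langle F\rangle \cap \partial B$, being the complement in $\partial\langle F\rangle$ of a nonempty union of facets, is automatically a ball, which is where "$B$ is a ball" (as opposed to a general CM complex) is used in an essential way.
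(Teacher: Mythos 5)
Your reduction to the facet-wise claim (``$B_F$ is CM if and only if $B_F$ is a ball'') is a reasonable framing and the claim itself is true, but your proof of the hard implication ($B_F$ CM $\Rightarrow$ $B_F$ ball) has a genuine gap: the hypothesis that $B_F$ is CM is never actually used. Every ingredient you invoke --- Lemma \ref{isshelled} applied to $B$, the identification of the facets of $\langle F\rangle\cap B_F$ with the interior ridges of $B$ lying in $F$, and the fact that a proper nonempty set of facets of $\partial\langle F\rangle$ generates a $(d-2)$-ball --- is available for an arbitrary facet of an arbitrary ball, so if the chain of deductions were valid it would show that every facet of every ball is free and every $B_F$ is a ball, contradicting the existence of Rudin's and Ziegler's balls, which is exactly the situation the proposition addresses. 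Two steps in the chain are in fact invalid. First, the ``nonempty'' assertion: a facet of a ball may contain no boundary ridge at all (already a triangulated disk with an interior triangle shows this), in which case $\langle F\rangle\cap B_F=\partial\langle F\rangle$ is a sphere; ruling this out under the assumption that $B_F$ is CM is real content, and it is precisely what Theorem \ref{thm3} (the $h_{d}$/Euler characteristic computation) supplies in the paper. Second, $\langle F\rangle\cap\partial B$ is in general strictly larger than the subcomplex generated by the boundary ridges of $B$ lying in $F$: a face of $F$ can lie on $\partial B$ without being contained in any boundary ridge inside $F$. Hence showing that the ridge-generated complex is a ball does not exhibit $F$ as a free facet, and the step ``removing a free facet leaves a ball'' cannot be invoked. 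Concretely, triangulate a pentagon with boundary cycle $a,b,d,c,e$ by the triangles $abc$, $bcd$, $ace$ and take $F=abc$: its unique boundary ridge $ab$ generates a $1$-ball, yet $\langle F\rangle\cap\partial B$ also contains the vertex $c$ and is disconnected, $F$ is not free, and $B_F$ (two triangles meeting only at $c$) is neither a ball nor CM --- your argument, run verbatim on this $F$, would conclude it is a ball. (The written equality $\langle F\rangle\cap B_F=\langle F\rangle\cap\partial B$ is also not correct; these complexes are complementary at the ridge level, not equal.)

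For comparison, the paper's proof of this direction proceeds differently: letting $\sigma$ be the intersection of the interior ridges of $F$, it uses strong non-shellability together with \cite[Proposition 2.4 (iii)]{Zi98} to show that $\langle F\rangle\cap\partial B$ has a facet that is not a ridge of $F$, hence $\sigma\in\partial B$, so $\lk_B(\sigma)$ is acyclic; then Theorem \ref{thm3} applied to the facet $F\setminus\sigma$, which has no boundary ridges in $\lk_B(\sigma)$, forces $h_{d-|\sigma|}(\lk_{B_F}(\sigma))=-1$, so $B_F$ is not CM. That acyclicity/Euler-characteristic input is the substance missing from your proposal; to repair your route you would need to use CM-ness of $B_F$ to prove both that $F$ has a boundary ridge and that the lower-dimensional boundary faces of $F$ do not prevent $\langle F\rangle\cap\partial B$ from being a ball.
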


\begin{proof}
We first show $(1)$ implies $(2)$. Suppose \(B\) is minimal CM. Then removing any facet of \(B\) gives a complex that is not CM. This certainly can't be a ball, so $B$ is strongly non-shellable.

We now show $(2)$ implies $(1)$.  Suppose \(B\) is strongly non-shellable. Let \(F\) be a facet of \(B\). By Lemma \ref{isshelled}, since \(B\) is CM, \(\langle F \rangle \cap B_F\) is pure of dimension \(d-2\). Then the facets of \(\langle F \rangle \cap B_F\) are exactly the ridges of $\langle F \rangle$ which are non-boundary ridges in \(B\); label these $R_1,\dots,R_m$ and let \(\sigma=\bigcap_{i=1}^m R_i\).  

Each ridge of $\langle F \rangle$ that is not in $B_F$ must then be in $\langle F \rangle \cap \partial B$, which cannot be a ball by \cite[Proposition 2.4 (iii)]{Zi98}.  Thus \(\langle F \rangle \cap \partial B\) must have a facet $\tau$ that is not a ridge of $\langle F \rangle$.  It follows that $\tau \in \langle F \rangle \cap B_F$.  In particular, we have $\sigma \in \langle F \rangle \cap \partial B$. 

Now, note that 
\[\lk_{\langle F \rangle \cap B_F}(\sigma)=\langle R_1 \setminus \sigma,\dots,R_m \setminus \sigma \rangle=\partial \langle F \setminus \sigma \rangle= \partial \lk_{\langle F \rangle} (\sigma).\]
This shows that $F \setminus \sigma$ is a facet of $\lk_B(\sigma)$ which contains no boundary ridges of $\lk_B(\sigma)$.  Since $\sigma \in \partial B$, $\lk_{B}(\sigma)$ is acyclic, so we may apply Theorem \ref{thm3} to conclude that $\lk_B(\sigma)_{F \setminus \sigma}=\lk_{B_F}(\sigma)$ is not CM.  Thus \(B_F\) is not CM, and we are done.

\end{proof}

\section{Building New Minimal CM Complexes From Old Ones}\label{newmCM}

In this section we provide several results which show operations such as gluing or taking joins can be used to construct new examples of minimal CM complexes. 

We begin with results on gluing, starting with the following corollary of Theorem \ref{thm3}.

\begin{cor}\label{glue1}

Suppose $\Delta_1$ and $\Delta_2$ are CM acyclic complexes.  Set $\Gamma=\Delta_1 \cap \Delta_2$.  Suppose $\partial D_1,\partial D_2 \subseteq \Gamma$, that $\dim(\partial \Delta_1)=\Gamma=\dim(\partial \Delta_2)$, and that $\Gamma$ is acyclic and CM.  Then $\Delta_1 \cup \Delta_2$ is minimal CM.

\end{cor}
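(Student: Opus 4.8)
The plan is to verify that $\Delta := \Delta_1 \cup \Delta_2$ satisfies the hypotheses of Theorem \ref{thm3} with $\ell = 1$, i.e.\ that $\Delta$ is an acyclic CM pseudomanifold, and then invoke Corollary \ref{twofacet}. Here I am reading the (slightly garbled) hypotheses as: $\Delta_1, \Delta_2$ are pure acyclic CM complexes of the same dimension $d-1$, $\Gamma = \Delta_1 \cap \Delta_2$ contains $\partial \Delta_1$ and $\partial \Delta_2$, $\Gamma$ has dimension $d-2$ (i.e.\ $\dim \partial \Delta_1 = \dim \Gamma = \dim \partial \Delta_2$), and $\Gamma$ is acyclic and CM. The first step is acyclicity of $\Delta$: since $\Gamma$ is acyclic, the reduced Mayer--Vietoris sequence
\[
\cdots \to \tilde H_i(\Gamma) \to \tilde H_i(\Delta_1) \oplus \tilde H_i(\Delta_2) \to \tilde H_i(\Delta) \to \tilde H_{i-1}(\Gamma) \to \cdots
\]
has both outer terms vanishing, so $\tilde H_i(\Delta) = 0$ for all $i$; thus $\Delta$ is acyclic.

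The second step is to check $\Delta$ is a pseudomanifold. Purity is clear since $\Delta_1, \Delta_2$ are pure of the same dimension. For the ridge condition: a ridge $R$ of $\Delta$ lying in the interior of $\Delta_1$ (i.e.\ not in $\partial \Delta_1 \subseteq \Gamma$) is in exactly two facets of $\Delta_1$ and, since it is not in $\Gamma$, in no facet of $\Delta_2$, so it is in exactly two facets of $\Delta$; symmetrically for ridges interior to $\Delta_2$. A ridge $R \subseteq \Gamma$ is a boundary ridge of both $\Delta_i$ (here I use that $\Gamma$ is pure of dimension $d-2$ together with $\partial \Delta_i \subseteq \Gamma$, so the $(d-2)$-faces of $\Gamma$ are exactly the boundary ridges — one should check that every $(d-2)$-face of $\Gamma$ is a boundary ridge of each $\Delta_i$, which follows from $\partial\Delta_i \subseteq \Gamma$ and dimension count), hence lies in exactly one facet of $\Delta_1$ and one of $\Delta_2$, giving exactly two facets of $\Delta$. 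Connectedness of the dual graph follows because each dual graph of $\Delta_i$ is connected and, since $\Gamma$ has dimension $d-2$ and contains a ridge, the two are joined across a common boundary ridge. The third step is Cohen--Macaulayness of $\Delta$: this should follow from a Mayer--Vietoris argument applied to links — for $\sigma \in \Delta$, $\lk_\Delta(\sigma) = \lk_{\Delta_1}(\sigma) \cup \lk_{\Delta_2}(\sigma)$ with intersection $\lk_\Gamma(\sigma)$, and each of these is acyclic CM (or empty) of the appropriate dimension, so the same Mayer--Vietoris/depth bookkeeping via Proposition \ref{depth} shows $\lk_\Delta(\sigma)$ has vanishing reduced homology below top dimension; combined with acyclicity of $\Delta$ itself this gives $\depth \Delta = d$. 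Alternatively one can cite a standard gluing lemma for CM complexes along a CM subcomplex of codimension one.

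Having established that $\Delta$ is an acyclic CM pseudomanifold, Corollary \ref{twofacet} immediately gives that $\Delta$ is minimal CM. The main obstacle I anticipate is the bookkeeping in the pseudomanifold step — specifically pinning down that the $(d-2)$-faces of $\Gamma$ are precisely the boundary ridges of $\Delta_1$ and of $\Delta_2$ (so that no ridge ends up in three or more facets of $\Delta$, and no interior ridge of $\Delta$ accidentally lies in only one facet), and the dual-graph connectivity, which requires $\Gamma$ to be nonempty of the right dimension. The homological steps are routine Mayer--Vietoris once the combinatorial picture is nailed down.
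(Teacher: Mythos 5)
Your proposal follows essentially the same route as the paper: the paper's entire proof is the observation that the hypotheses force every ridge of $\Delta=\Delta_1\cup\Delta_2$ to lie in at least two facets, followed by an appeal to Corollary \ref{twofacet} (really Theorem \ref{thm3} with $\ell=1$), with acyclicity and Cohen--Macaulayness of $\Delta$ left implicit; your Mayer--Vietoris argument for acyclicity and your depth argument for CM-ness (cleanest via the sequence $0\to k[\Delta]\to k[\Delta_1]\oplus k[\Delta_2]\to k[\Gamma]\to 0$ and the depth lemma, exactly as in Proposition \ref{glue2}) fill in those implicit steps correctly. One caveat: your attempt to verify the literal pseudomanifold condition overreaches. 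The claim that every ridge contained in $\Gamma$ is a boundary ridge of both $\Delta_1$ and $\Delta_2$ does not follow from $\partial\Delta_i\subseteq\Gamma$ and $\dim\Gamma=d-2$; nothing in the hypotheses prevents $\Gamma$ from containing a ridge that is interior to $\Delta_1$ yet lies in a facet of $\Delta_2$, in which case that ridge sits in three or more facets of $\Delta$ and the ``exactly two'' condition fails. Fortunately this step is superfluous: Theorem \ref{thm3} with $\ell=1$ (the actual content behind Corollary \ref{twofacet}) only needs that no facet of $\Delta$ contains a boundary ridge, i.e.\ every ridge lies in \emph{at least} two facets, and this does hold --- an interior ridge of some $\Delta_i$ already has two facets there, while a boundary ridge of $\Delta_i$ lies in $\partial\Delta_i\subseteq\Gamma\subseteq\Delta_j$ ($j\ne i$) and hence, by purity of $\Delta_j$ and the fact that no facet is common to $\Delta_1$ and $\Delta_2$, picks up a second facet from the other complex. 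With that correction (and dropping the dual-graph connectivity, which is likewise not needed), your argument is complete and matches the paper's.
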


\begin{proof}
The assumptions and construction ensures that any ridge of $\Delta = \Delta_1\cup \Delta_2$ is contained in at least two facets. Thus we can appeal to \ref{twofacet}.

\end{proof}





For our next gluing result, we need the following dual notion of minimal CM.

\begin{defn}

We say $\Delta$ is \textbf{strongly CM} if $\Delta$ is CM and $\Delta_{F_i}$ is CM for any $i$.

\end{defn}

\begin{prop}\label{glue2}

If $\Delta$ and $\Gamma$ are minimal CM of dimension $d-1$ and $\Delta \cap \Gamma$ is strongly CM of dimension $d-1$, then $\Delta \cup \Gamma$ is minimal CM. 
\end{prop}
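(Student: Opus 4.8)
The goal is to show that $\Delta \cup \Gamma$ is minimal CM. First I would establish that $\Delta \cup \Gamma$ is CM. Since $\Delta$, $\Gamma$, and $\Delta \cap \Gamma$ are all CM of dimension $d-1$, the Mayer-Vietoris sequence together with the characterization of $\depth$ in Proposition \ref{depth} applied to links should give this: for a face $T$, we have $\lk_{\Delta \cup \Gamma}(T) = \lk_\Delta(T) \cup \lk_\Gamma(T)$ with intersection $\lk_{\Delta \cap \Gamma}(T)$, and each of these is CM of the appropriate dimension (or empty, or lower-dimensional in a controlled way), so a Mayer-Vietoris argument on reduced homology forces the vanishing needed for $\depth (\Delta \cup \Gamma) = d$. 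This is the kind of standard ``CM gluing'' computation, and I would cite or reproduce it quickly.

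The substantive part is showing that removing any facet destroys Cohen-Macaulayness. Fix a facet $F$ of $\Delta \cup \Gamma$; without loss of generality $F$ is a facet of $\Delta$. The key point is to compare $(\Delta \cup \Gamma)_F$ with $\Delta_F$ via links. I would argue that there is a face $\sigma$ (obtained, as in the proof of Lemma \ref{isshelled} and Theorem \ref{thm3}, from the non-boundary ridges of $F$ inside $\Delta$) such that $\lk_{\Delta_F}(\sigma)$ is not CM — this is exactly what minimality of $\Delta$ delivers, since $\Delta$ minimal CM means $\Delta_F$ is not CM, and (tracing through the proof of Theorem \ref{thm3}/Proposition \ref{strongnonshell}) the failure is detectable at a specific link having a negative $h$-vector entry or nonvanishing homology in the wrong degree. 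Then I would need: $\lk_{(\Delta \cup \Gamma)_F}(\sigma)$ fails to be CM as well. Here is where the hypothesis that $\Delta \cap \Gamma$ is \emph{strongly} CM enters: removing $F$ from $\Delta \cup \Gamma$ removes it from the $\Delta$-side only, and the relevant link on the $\Gamma \cup (\Delta \cap \Gamma)$ side is unchanged and CM; moreover, because $\Delta \cap \Gamma$ is strongly CM, whatever facets of $\Delta \cap \Gamma$ survive, the gluing of the (non-CM) $\lk_{\Delta_F}(\sigma)$ with the (CM) $\lk_{\Gamma}(\sigma)$ along the (still CM) $\lk_{\Delta \cap \Gamma}(\sigma)$ cannot repair the defect — the $h$-vector computation or homology computation from Theorem \ref{thm3} is additive across the Mayer-Vietoris pieces in a way that preserves the negative entry.

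More precisely, the plan for this core step is: (i) identify $\sigma \subseteq F$ with $F \setminus \sigma$ a facet of $\lk_\Delta(\sigma)$ containing no boundary ridges of $\lk_\Delta(\sigma)$, so that by the argument of Theorem \ref{thm3} we get $h_{d-|\sigma|}(\lk_{\Delta_F}(\sigma)) = h_{d-|\sigma|}(\lk_\Delta(\sigma)) - 1$; (ii) observe $\lk_{(\Delta \cup \Gamma)_F}(\sigma) = \lk_{\Delta_F}(\sigma) \cup \lk_{\Gamma}(\sigma)$ with intersection $\lk_{\Delta \cap \Gamma}(\sigma)$ (using that $F \notin \Gamma$, so deleting $F$ does not touch the $\Gamma$ side), where $\lk_\Gamma(\sigma)$ and $\lk_{\Delta\cap\Gamma}(\sigma)$ are CM; (iii) compute the top $h$-vector entry of this union by inclusion-exclusion on face numbers, $f_{k-1}(A \cup B) = f_{k-1}(A) + f_{k-1}(B) - f_{k-1}(A \cap B)$, so that $h_{d-|\sigma|}(\lk_{(\Delta\cup\Gamma)_F}(\sigma)) = h_{d-|\sigma|}(\lk_{\Delta_F}(\sigma)) + h_{d-|\sigma|}(\lk_\Gamma(\sigma)) - h_{d-|\sigma|}(\lk_{\Delta\cap\Gamma}(\sigma))$. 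Since $\lk_\Gamma(\sigma)$ and $\lk_{\Delta\cap\Gamma}(\sigma)$ are CM of the same dimension, their top $h$-entries are $(-1)^{d-|\sigma|-1}\tilde\chi$, both equal to $0$ if that dimension is positive (acyclicity is not automatic, but strong CM of $\Delta \cap \Gamma$ combined with the earlier acyclicity results — Corollary \ref{cor2} applied appropriately, or the fact that strongly CM complexes are shellable-adjacent — should pin down the Euler characteristic contributions to cancel); in any case the two CM terms contribute equally and cancel, leaving $h_{d-|\sigma|}(\lk_{(\Delta\cup\Gamma)_F}(\sigma)) = h_{d-|\sigma|}(\lk_{\Delta_F}(\sigma)) = -1 < 0$, so $(\Delta \cup \Gamma)_F$ is not CM.

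\textbf{Main obstacle.} The delicate point is step (iii): controlling the top $h$-vector entries of $\lk_\Gamma(\sigma)$ and $\lk_{\Delta\cap\Gamma}(\sigma)$ and showing their contributions cancel exactly. This requires knowing that $\lk_\Gamma(\sigma)$ and $\lk_{\Delta \cap \Gamma}(\sigma)$ have equal top $h$-entries — which would follow if both are acyclic, or more subtly if the Mayer-Vietoris sequence forces $\tilde\chi(\lk_\Delta(\sigma)) = \tilde\chi(\lk_\Gamma(\sigma)) + \tilde\chi(\lk_{\Delta\cap\Gamma}(\sigma)) \cdot (\pm 1)$ to balance against the $-1$ defect coming from the $\Delta_F$ side. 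One should also double-check the edge case where $\sigma$ is such that $\lk_\Gamma(\sigma)$ or $\lk_{\Delta\cap\Gamma}(\sigma)$ is empty or has smaller dimension, where the bookkeeping changes — but in those degenerate cases $\lk_{(\Delta\cup\Gamma)_F}(\sigma)$ essentially equals $\lk_{\Delta_F}(\sigma)$ on the nose, and the conclusion is immediate. The strong CM hypothesis on $\Delta \cap \Gamma$ is what guarantees that removing facets never introduces a competing defect from the intersection side, which is precisely why it, rather than mere CM-ness, is assumed.
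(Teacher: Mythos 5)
Your proposal has genuine gaps in the core step, and it also takes a much harder route than necessary. First, step (i) is not something minimality of $\Delta$ delivers. Minimal CM only tells you $\Delta_F$ is not CM, i.e.\ by Reisner some link of $\Delta_F$ has nonvanishing homology below top degree \emph{somewhere}; it does not produce a face $\sigma \subseteq F$ built from the non-boundary ridges of $F$ with $h_{d-|\sigma|}(\lk_{\Delta_F}(\sigma)) = -1$. That specific mechanism is the content of Theorem \ref{thm3}, and it needs $\lk_\Delta(\sigma)$ to be acyclic (hence the $\ell$-fold acyclicity hypothesis there, with $\ell$ exceeding the number of boundary ridges of $F$); a general minimal CM complex is only $1$-fold acyclic by Corollary \ref{cor2}, and a facet may have many boundary ridges. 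Second, the cancellation in step (iii) is unsubstantiated, as you yourself flag: CM does not imply acyclic, so the top $h$-entries of $\lk_\Gamma(\sigma)$ and $\lk_{\Delta\cap\Gamma}(\sigma)$ are just $(-1)^{\dim}\tilde\chi$ of CM complexes and need not be equal; ``strongly CM complexes are shellable-adjacent'' is not an argument. Third, you silently assume $F \notin \Gamma$ in step (ii), but since $\Delta\cap\Gamma$ has dimension $d-1$ there \emph{are} facets common to $\Delta$ and $\Gamma$, and that shared-facet case is exactly where the strong CM hypothesis is actually used; your proposal never treats it.

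The paper avoids all of this with a short algebraic argument: the Mayer--Vietoris short exact sequence of Stanley--Reisner rings $0 \to k[\Delta\cup\Gamma] \to k[\Delta]\oplus k[\Gamma] \to k[\Delta\cap\Gamma] \to 0$ gives CM-ness of the union by the depth lemma, and for a facet $F$ (say of $\Delta$) one writes the same sequence with $F$ removed: if $F \notin \Gamma$ the sequence is $0 \to k[\Delta_F\cup\Gamma] \to k[\Delta_F]\oplus k[\Gamma] \to k[\Delta\cap\Gamma] \to 0$, and if $F \in \Delta\cap\Gamma$ it is $0 \to k[(\Delta\cup\Gamma)_F] \to k[\Delta_F]\oplus k[\Gamma_F] \to k[(\Delta\cap\Gamma)_F] \to 0$, where the right-hand term is CM precisely because $\Delta\cap\Gamma$ is strongly CM. In either case the middle term has depth $<d$ (by minimality of $\Delta$, and of $\Gamma$ in the shared case) while the right-hand term has depth $d$, so depth counting forces $\depth k[(\Delta\cup\Gamma)_F] < d$. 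No identification of a particular bad link, and no $h$-vector bookkeeping, is needed. If you want to salvage your combinatorial approach, you would have to replace steps (i) and (iii) by an argument that works from an arbitrary Reisner obstruction in $\Delta_F$ and control how it interacts with the gluing, which is essentially redoing the depth-lemma argument in homological disguise.
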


\begin{proof}

We have an exact sequence $0 \to k[\Delta \cup \Gamma] \to k[\Delta] \oplus k[\Gamma] \to k[\Delta \cap \Gamma] \to 0$ and then $k[\Delta \cup \Gamma]$ is CM by the depth lemma. Now let $F$ be a facet of $\Delta \cap \Gamma$.  Without loss of generality, assume $F \in \Delta$.  If $F \notin \Delta \cap \Gamma$, then we have the exact sequence 

\[0 \to k[\Delta_F \cup \Gamma] \to k[\Delta_F] \oplus k[\Gamma] \to k[\Delta \cap \Gamma] \to 0.\] 
Since $k[\Delta_F]$ is not CM, neither is $k[\Delta_F \cup \Gamma]$. Otherwise, $F \in \Delta \cap \Gamma$, and we have the exact sequence 
\[0 \to k[(\Delta \cup \Gamma)_F] \to k[\Delta_F] \oplus k[\Gamma_F] \to k[(\Delta \cap \Gamma)_F] \to 0.\]

Since $\Delta \cap \Gamma$ is CM and $k[\Delta_F] \oplus k[\Gamma_F]$ is not, $k[(\Delta \cup \Gamma)_F]$ is not CM, completing the proof.

\end{proof}

\begin{remark}

The proof of Proposition \ref{glue2} does not actually require $\Delta \cap \Gamma$ to be strongly CM, only that $\Delta \cap \Gamma$ have dimension and depth $d-1$, and that $(\Delta \cap \Gamma)_F$ be CM for every facet $F \in \Delta \cap \Gamma$.

\end{remark}

We end this section by showing that the join of a minimal CM complex and another (not necessarily minimal) CM complex is minimal CM. 

\begin{theorem}\label{thm4}
If \(\Delta\) is minimal CM and \(\Gamma\) is CM, then \(\Delta \star \Gamma\) is minimal CM.
\end{theorem}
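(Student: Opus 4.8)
The plan is to reduce everything to a statement about links and the two standard facts about joins: first, that $\Delta \star \Gamma$ is CM over $k$ iff both $\Delta$ and $\Gamma$ are CM (this is classical, e.g. via the Künneth-type formula $k[\Delta\star\Gamma]\cong k[\Delta]\otimes_k k[\Gamma]$, or by Reisner's criterion applied to links), and second, that links in a join decompose: for $\sigma = \sigma_\Delta \cup \sigma_\Gamma$ with $\sigma_\Delta\in\Delta$, $\sigma_\Gamma\in\Gamma$, one has $\lk_{\Delta\star\Gamma}(\sigma) = \lk_\Delta(\sigma_\Delta)\star\lk_\Gamma(\sigma_\Gamma)$. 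So the first step is: $\Delta\star\Gamma$ is CM since $\Delta$ and $\Gamma$ are. It remains to show that deleting any facet of $\Delta\star\Gamma$ destroys Cohen-Macaulayness.

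Next I would set up notation: the facets of $\Delta\star\Gamma$ are exactly the sets $F\cup G$ where $F$ is a facet of $\Delta$ and $G$ is a facet of $\Gamma$, and $\dim(\Delta\star\Gamma) = \dim\Delta + \dim\Gamma + 1$, so if $\Delta$ has dimension $d-1$ and $\Gamma$ dimension $e-1$ then $\Delta\star\Gamma$ has dimension $d+e-1$. Fix a facet $H = F\cup G$ of $\Delta\star\Gamma$ that we wish to remove. The key observation is to identify $(\Delta\star\Gamma)_H$ — the complex obtained by deleting the single facet $H$ — in terms of a link. Since $\Delta$ is minimal CM, by Corollary \ref{cor2} it is acyclic, hence in particular $\tilde H_{d-1}(\Delta)\ne 0$ is false; but more usefully, $\Delta$ being minimal CM means $\Delta_F$ is not CM, and by Lemma \ref{isshelled}/Theorem \ref{facetdeath} the obstruction to $\Delta_F$ being CM shows up in the homology of the link of some face $\sigma\subseteq F$. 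The strategy is to locate a face $\tau$ of $\Delta\star\Gamma$ contained in $H$ such that $\lk_{(\Delta\star\Gamma)_H}(\tau)$ has a nonvanishing homology group forbidden by Proposition \ref{depth}, thereby witnessing that $(\Delta\star\Gamma)_H$ is not CM.

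Concretely, I expect the right move is: because $\Delta_F$ is not CM while $\Delta$ is CM, Proposition \ref{depth} (applied to $\Delta_F$, whose Stanley-Reisner ring drops depth) produces a face $\sigma\in\Delta_F$ with $\tilde H_{i-1}(\lk_{\Delta_F}(\sigma))\ne 0$ for some $i$ with $i+|\sigma|< d$; and by the skeleton argument of Theorem \ref{facetdeath}, we may take $\sigma\subseteq F$ and the only new homology is in top degree. Now consider the face $\sigma \cup G' $ or rather work with $\tau = \sigma$ inside $\Delta\star\Gamma$: using the link decomposition, $\lk_{(\Delta\star\Gamma)_H}(\sigma) = \lk_{\Delta_F}(\sigma) \star \lk_\Gamma(G)$ when we also pass to the appropriate face on the $\Gamma$ side, or more cleanly, removing the facet $F\cup G$ from $\Delta\star\Gamma$ has the same effect "locally at $\sigma\cup$(a suitable face of $G$)" as removing $F$ from $\Delta$. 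Then the Künneth formula $\tilde H_\bullet(X\star Y)$ expressed via $\tilde H_\bullet(X)\otimes\tilde H_\bullet(Y)$ (with the degree shift $\tilde H_{n}(X\star Y)\cong\bigoplus_{p+q=n-1}\tilde H_p(X)\otimes\tilde H_q(Y)$) transports the nonvanishing homology from $\lk_{\Delta_F}(\sigma)$ to nonvanishing homology of $\lk_{(\Delta\star\Gamma)_H}(\tau)$ in a degree still low enough to violate the depth bound $i+|\tau| < d+e$. Hence $(\Delta\star\Gamma)_H$ is not CM.

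The main obstacle is the bookkeeping in the middle paragraph: correctly identifying which face $\tau$ of $\Delta\star\Gamma$ to take the link at, and verifying that deleting the facet $F\cup G$ of the join genuinely induces, in that link, the deletion of the facet $F$ from $\Delta$ (and not something larger). This requires care because a ridge $R\cup G$ of $\Delta\star\Gamma$ lies in $F\cup G$ iff $R\subseteq F$ is a ridge of $\langle F\rangle$ — so the non-boundary-ridge structure of $F$ in $\Delta$ is exactly reflected in $\Delta\star\Gamma$ — together with the fact that $G$, being a facet of the CM complex $\Gamma$, contributes only acyclic links. Once that local reduction is pinned down, the rest is the Künneth computation plus Proposition \ref{depth}, both routine.
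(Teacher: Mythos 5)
Your overall strategy (reduce non-CMness of $(\Delta\star\Gamma)_H$ to non-CMness of $\Delta_{F}$ via a link) is the right one, but the proposal has a genuine gap at exactly the point you flag as ``the main obstacle'': you never identify the face at which to take the link, nor verify the local reduction, and the two routes you hedge between are not equally viable. The Künneth route at $\tau=\sigma$ alone fails as stated: $\lk_{(\Delta\star\Gamma)_H}(\sigma)$ is \emph{not} $\lk_{\Delta_F}(\sigma)\star(\text{something})$; it is $(\lk_\Delta(\sigma)\star\Gamma)$ with the single facet $(F\setminus\sigma)\cup G$ deleted, i.e.\ again a join-minus-a-facet, so Künneth does not apply and you are back to the original problem. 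Relatedly, your remark that $G$ ``contributes only acyclic links'' points the wrong way: if the $\Gamma$-side factor were genuinely acyclic, Künneth would force the join to be acyclic and destroy the homological obstruction you are trying to transport; what actually saves the day is that $\lk_\Gamma(G)=\{\varnothing\}$, which acts as the identity for joins.

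The missing step is resolved by taking the link at $\tau=\sigma\cup G$ with $G$ the \emph{entire} facet of $\Gamma$ (this is the ``suitable face'' you left unspecified, and it is what the paper does). Since $G$ is a facet of $\Gamma$, any facet of $\Delta\star\Gamma$ containing $\sigma\cup G$ has the form $F''\cup G$ with $F''$ a facet of $\Delta$ containing $\sigma$; hence deleting $H=F\cup G$ removes precisely the facets through $F$, and one checks directly (if $\tau'\cup\gamma\in\lk_{(\Delta\star\Gamma)_H}(\sigma\cup G)$ then $\gamma\cup G\in\Gamma$ and $\gamma\cap G=\varnothing$, forcing $\gamma=\varnothing$) that
\[
\lk_{(\Delta\star\Gamma)_H}(\sigma\cup G)\;=\;\lk_{\Delta_F}(\sigma).
\]
No Künneth computation is then needed: with $\sigma$ chosen by Reisner's criterion so that $\tilde{H}_i(\lk_{\Delta_F}(\sigma))\ne 0$ for some $i<\dim\Delta-|\sigma|$, the dimension count $\dim(\Delta\star\Gamma)-|\sigma\cup G|=\dim\Delta-|\sigma|$ shows the same homology class violates Reisner's criterion for $(\Delta\star\Gamma)_H$. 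Also note your claim that one ``may take $\sigma\subseteq F$'' via Theorem \ref{facetdeath} is both unjustified and unnecessary; any $\sigma\in\Delta_F$ witnessing non-CMness works.
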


\begin{proof}
First we note that \(\Delta \star \Gamma\) is CM. Let \(F\) be a facet of \(\Delta \star \Gamma\). We may write \(F = F' \star G\) for some facets \(F'\) of \(\Delta\) and \(G\) of \(\Gamma\). Since \(\Delta_{F'}\) is not CM, there exists \(\sigma \in \Delta_{F'}\) such that \(\tilde{H}_i(\lk_{\Delta_{F'}}(\sigma)) \ne 0\) for some \(i<\dim(\Delta)-|\sigma|\).

We now show that \(L=\lk{\Delta_{F'}}(\sigma)\) and \(L'=\lk_{(\Delta\star\Gamma)_{F}}(\sigma \star G))\) are isomorphic posets. If \(\tau \in L\), it is immediate that \(\tau \star \emptyset \in L'\). Furthermore, if \(\tau \star \emptyset \in L'\), then \(\tau \in L\). Suppose \(\tau \star \gamma \in L'\), then \(\gamma \cup G \in \Gamma\) and \(\gamma \cap G = \emptyset\). This implies that \(\gamma = \emptyset\). So \(L\) and \(L'\) are isomorphic posets.

With this isomorphism, we see that \(\tilde{H}_i(\lk_{(\Delta\star\Gamma)_{F}}(\sigma \star G)) \ne 0\) for some \(i<\dim(\Delta \star \Gamma)-|\sigma \star G|\). Thus \((\Delta \star \Gamma)_F\) is not CM by Reisner's Criterion (\cite[Theorem 1]{Re76}), and therefore, \(\Delta \star \Gamma\) is minimal CM.
\end{proof}

\section{Examples}\label{examples}

In this section,  we consider some notable examples of CM complexes from the literature, and show that they are minimal CM. We expect that this is far from a complete list of minimal CM examples currently published. 

A large class of minimal CM complexes are those that satisfy the conditions of Corollary \ref{twofacet}. The following complexes fall in this class:

\begin{itemize}
\item Triangulations of \(\mathbb{RP}^{2n}\) (over \(k\) of characteristic not 2)
\item The dunce hat
\item Bing's House with 2 rooms \cite{Ha99}
\item The pastry \cite{Do18}
\end{itemize}


The following are all strongly non-shellable balls, which are minimal CM by Proposition \ref{strongnonshell}.

\begin{itemize}
\item Rudin's Ball \cite{Ru58}
\item \(B^3_{16,48}\), \(B^3_{12,37,a}\), and  \(B^3_{12,37,b}\) \cite{Lu04}
\item \(B_{3,9,18}\) \cite{Lu04-2}
\item Ziegler's Ball \cite{Zi98}
\end{itemize}

This next class of minimal CM complexes are constructible complexes which are not themselves balls. Each of these were verified to be minimal CM by applying Theorem \ref{thm3}. 

\begin{itemize}
\item The complex \(C_3\) in \cite{DG16}, a non-partitionable CM complex
\item The complex \(C_3\) in \cite{JV17}, a balanced non-partitionable CM complex
\item The complex \(\Omega_3\) in \cite{DG18}, a 2-fold acyclic complex with no decomposition into rank 2 boolean intervals
\end{itemize}

Each of these complexes is a counterexample to an associated conjecture in the literature; see the references for more details. They are each the result of gluing many copies of a CM complex along a CM subcomplex, a similar process to  Proposition \ref{glue2}.

\section*{Acknowledgments}
We thank Bruno Benedetti for insightful conversations regarding constructions of interesting balls and nonshellable complexes. The first author also acknowledges partial support from Simons Collaboration Grant FND0077558.  We thank the Mathematics Department at the University of Kansas for the excellent working and collaborating conditions.

\bibliographystyle{plain}
\bibliography{mybib}

\begin{thebibliography}{10}

\bibitem{BH98}
Winfried Bruns and H.~J{\"u}rgen Herzog.
\newblock {\em Cohen-{M}acaulay rings}.
\newblock Cambridge University Press, 1998.

\bibitem{DK78}
Gopal Danaraj and Victor Klee.
\newblock Which spheres are shellable?
\newblock {\em Ann. Discrete Math.}, 2:33--52, 1978.
\newblock Algorithmic aspects of combinatorics (Conf., Vancouver Island, B.C.,
  1976).

\bibitem{Do18}
Joseph Doolittle.
\newblock {A minimal counterexample to a strengthening of Perles' conjecture}.
\newblock {\em ArXiv e-prints}, November 2018.

\bibitem{DG18}
Joseph Doolittle and Bennet Goeckner.
\newblock {On Stanley's Conjecture on k-fold Acyclic Complexes}.
\newblock {\em ArXiv e-prints}, November 2018.

\bibitem{DG16}
Art~M. Duval, Bennet Goeckner, Caroline~J. Klivans, and Jeremy~L. Martin.
\newblock A non-partitionable {C}ohen-{M}acaulay simplicial complex.
\newblock {\em Adv. Math.}, 299:381--395, 2016.

\bibitem{ER98}
John Eagon and Victor Reiner.
\newblock Resolutions of {S}tanley-{R}eisner rings and {A}lexander duality.
\newblock {\em J. Pure Appl. Algebra}, 130:265--275, 1998.

\bibitem{Ha99}
M.~Hachimori.
\newblock Nonconstructible simplicial balls and a way of testing
  constructibility.
\newblock {\em Discrete Comput. Geom.}, 22(2):223--230, 1999.

\bibitem{Ha00}
Masahiro Hachimori.
\newblock {\em Combinatorial Structure of Constructible Complexes}.
\newblock PhD thesis, University of Tokyo, 2000.

\bibitem{HH04}
J\"{u}rgen Herzog, Takayuki Hibi, and Xinxian Zheng.
\newblock Dirac's theorem on chordal graphs and {A}lexander duality.
\newblock {\em European J. Combin.}, 25(7):949--960, 2004.

\bibitem{JV17}
M.~{Juhnke-Kubitzke} and L.~{Venturello}.
\newblock {A balanced non-partitionable Cohen-Macaulay complex}.
\newblock {\em ArXiv e-prints}, November 2017.

\bibitem{Lu04-2}
Frank~H. {Lutz}.
\newblock {A vertex-minimal non-shellable simplicial 3-ball with 9 vertices and
  18 facets.}
\newblock {\em {Electronic Geometry Model}}, 2004.

\bibitem{Lu04}
Frank~H. Lutz.
\newblock Small examples of nonconstructible simplicial balls and spheres.
\newblock {\em SIAM J. Discrete Math.}, 18(1):103--109, 2004.

\bibitem{MT09}
Satoshi Murai and Naoki Terai.
\newblock {$h$}-vectors of simplicial complexes with {S}erre's conditions.
\newblock {\em Math. Res. Lett.}, 16(6):1015--1028, 2009.

\bibitem{Re76}
Gerald~Allen Reisner.
\newblock Cohen-{M}acaulay quotients of polynomial rings.
\newblock {\em Advances in Math.}, 21(1):30--49, 1976.

\bibitem{Ru58}
Mary~Ellen Rudin.
\newblock An unshellable triangulation of a tetrahedron.
\newblock {\em Bull. Amer. Math. Soc.}, 64:90--91, 1958.

\bibitem{Ya00}
Kohji Yanagawa.
\newblock Alexander duality for {S}tanley-{R}eisner rings and square-free
  $\mathbb{N}^n$-graded modules.
\newblock {\em J. Pure Appl. Algebra}, 146(3):630--645, 2000.

\bibitem{Zi98}
G.~M. Ziegler.
\newblock Shelling polyhedral {$3$}-balls and {$4$}-polytopes.
\newblock {\em Discrete Comput. Geom.}, 19(2):159--174, 1998.

\end{thebibliography}

\end{document}